\newcommand\PP{{\mathbb P}}
\newcommand\NN{{\mathbb N}}
\newcommand\JJ{{\cal J}}
\newcommand{\floor}[1]{\lfloor #1\rfloor}
\newtheorem{theorem}{Theorem}
\newtheorem{lemma}[theorem]{Lemma}
\newtheorem{proposition}[theorem]{Proposition}
\newtheorem{conjecture}{Conjecture}
\newtheorem{problem}{Problem}
\newcounter{claimprefix}
\newtheorem{claim}{Claim}[claimprefix]
\newcommand*{\claimproof}{Proof of the Claim}
\newenvironment{proofcl}[1][\claimproof]{\begin{proof}[#1]
}{\end{proof}}
\begin{document}
\title{Density maximizers of layered permutations\thanks{The first, second and fourth authors were supported by the MUNI Award in Science and Humanities of the Grant Agency of Masaryk University. The first author was also supported by the project GA20-09525S of the Czech Science Foundation. The third author was supported by the Leverhulme Trust Early Career Fellowship ECF-2018-534.}}
\author{Adam Kabela\thanks{Faculty of Applied Sciences, University of West Bohemia,
Pilsen, Czech Republic. E-mail: {\tt kabela@kma.zcu.cz}. Previous affiliation: Faculty of Informatics, Masaryk University, Brno, Czech Republic.}\and
\and
Daniel Kr\'al'\thanks{Faculty of Informatics, Masaryk University, Botanick\'a 68A, 602 00 Brno, Czech Republic. E-mail: {\tt dkral@fi.muni.cz}.}
\and
Jonathan A. Noel\thanks{Department of Mathematics and Statistics, University of Victoria, Victoria, BC, Canada, V8P 5C2. E-mail: {\tt noelj@uvic.ca}. Previous affiliation: Mathematics Institute and DIMAP, University of Warwick, CV4 7AL Coventry, UK.} \and
Th\'eo Pierron\thanks{Univ. Lyon, Université Lyon 1, LIRIS UMR CNRS 5205, F-69621, Lyon, France. E-mail: {\tt theo.pierron@univ-lyon1.fr}. Previous affiliation: Faculty of Informatics, Masaryk University, Botanick\'a 68A, 602 00 Brno, Czech Republic.}
}
\date{}
\maketitle

\begin{abstract}
A permutation is layered if it contains neither $231$ nor $312$ as a pattern.
It is known that, if $\sigma$ is a layered permutation, then the density of $\sigma$ in a permutation
of order $n$ is maximized by a layered permutation. 
Albert, Atkinson, Handley, Holton and Stromquist [Electron. J. Combin. 9 (2002), R\#5]
claimed that the density of a layered permutation with layers of sizes $(a,1,b)$ where $a,b\geq2$
is asymptotically maximized by layered permutations with a bounded number of layers, and
conjectured that the same holds if a layered permutation has 
no consecutive layers of size one and its first and last layers are of size at least two.

We show that, if $\sigma$ is a layered permutation whose first layer is sufficiently large and second layer is of size one,
then the number of layers tends to infinity in every sequence of layered permutations asymptotically maximizing the density of $\sigma$.
This disproves the conjecture and the claim of Albert et al.
We complement this result by giving sufficient conditions on a layered permutation
to have asymptotic or exact maximizers with a bounded number of layers.
\end{abstract}

\section{Introduction}
\label{sec:intro}

We study permutations maximizing the density of a given pattern.
A \emph{permutation}
is a bijective function $\pi$ from $[n]$ to $[n]$ (we use $[n]$ to denote the set of the first $n$ positive integers);
the \emph{order} of a permutation $\pi$, denoted by $|\pi|$, is the size $n$ of its domain.
If $\pi$ is a permutation of order $n$,
then the permutation \emph{induced} by $m$ points $i_1<\cdots<i_m$ of $[n]$
is the unique permutation $\sigma:[m]\to [m]$ such that $\sigma(j)<\sigma(j')$ if and only if $\pi(i_j)<\pi(i_{j'})$ for all $j,j'\in [m]$.
The \emph{density} of a permutation $\sigma$ in a permutation $\pi$ of order $n\geq |\sigma|$, denoted by $d(\sigma,\pi)$,
is the probability that $|\sigma|$ randomly chosen distinct points of $[n]$ induce $\sigma$;
in this context, the permutation $\sigma$ is often referred to as a \emph{pattern} and
one may speak about permutations containing or avoiding a specific pattern.

The problem of maximizing the density of a permutation $\sigma$
can be traced back to the work of Galvin, Kleitman, Stromquist and Wilf in the early 1990s (cf.~\cite{Wil02}).
The \emph{packing density} of a permutation $\sigma$
is the limit of the maximum density of $\sigma$ in a permutation $\pi$ of order $n$ for $n\to\infty$;
an averaging argument implies that the limit exists as the quantity is non-increasing with $n$
(see, e.g.,~\cite[Proposition~1.1]{AlbAHHS02}).
It is easy to see that the packing density of the permutation $12$, and more generally $12\dots k$ for every $k\ge 2$, is $1$ (we write $\sigma(1)\cdots\sigma(m)$ to represent the permutation $\sigma$ of order $m$).
For the permutation $132$,
Galvin, Kleitman and Stromquist (independently and unpublished) showed that the packing density is $2\sqrt{3}-3$;
a sketch of a proof can be found in~\cite[Example 2.4.2]{War05}.
See e.g.~\cite{Mye02,SliS17} for additional results.

In this paper, we are interested in the structure of permutations maximizing the density of a layered permutation:
a permutation $\sigma$ of order $m$ is \emph{layered}
if $[m]$ can be partitioned to intervals $I_1,\ldots,I_k$ such that
$\sigma$ restricted to each $I_j$, $j\in [k]$, is decreasing, and
$\sigma(x)<\sigma(x')$ for any $x\in I_j$ and $x'\in I_{j'}$ such that $1\le j<j'\le k$ (we assume that the intervals are indexed in the order that they follow in $[m]$).
The intervals $I_1,\ldots,I_k$ are referred to as \emph{layers} of $\sigma$ and
layers of size one as \emph{singletons}.
We also refer to $\sigma$ as the layered permutation with layers of sizes $(|I_1|,\ldots,|I_k|)$.
It is not hard to show that a permutation is layered if and only if it avoids the patterns $231$ and $312$;
we refer to~\cite{BasC16,HilSV04,Pre08,War04,War06} for additional results on layered permutations.
If $\sigma$ is a layered permutation, then for every $n\ge|\sigma|$,
there exists a layered permutation of order $n$
that maximizes the density of $\sigma$ among all permutations of order $n$~\cite{AlbAHHS02,Bar04,Str93}, and
if no layer of $\sigma$ is a singleton,
then every permutation of order $n$ that maximizes the density of $\sigma$ is layered~\cite{AlbAHHS02};
we also refer to~\cite{Pri97} for a comprehensive treatment of maximizing the density of layered permutations.

To state our results, we need to fix some notation.
Given a permutation $\sigma$, a permutation $\pi$ of order at least $|\sigma|$ is \emph{$\sigma$-optimal}
if $\pi$ is a permutation of order $|\pi|$ with the largest density of $\sigma$
among all permutations of order $|\pi|$.
In particular, the results of the previous paragraph say that
if $\sigma$ is a layered permutation, then there exists a layered $\sigma$-optimal permutation of every order $n\geq|\sigma|$, and
if each layer of $\sigma$ has size at least two, then every $\sigma$-optimal permutation is layered.
Following the terminology of Warren~\cite{War05},
we say that a sequence $(\pi_n)_{n\in\NN}$ of permutations is \emph{near-$\sigma$-optimal}
if the orders of $\pi_n$ tend to infinity and
the density of $\sigma$ in $\pi_n$ converges to the packing density of $\sigma$.
Observe that for every layered permutation $\sigma$,
there exists a near-$\sigma$-optimal sequence of layered permutations.

Our central focus is on the question of
whether the number of layers in a near-$\sigma$-optimal sequence of layered permutations
must grow to infinity for a given layered permutation $\sigma$.
For example, if $\sigma$ is a layered permutation in which the first or last layer is a
singleton, 
then the number of layers in any near-$\sigma$-optimal sequence of layered permutations 
grows to infinity~\cite[Proposition~2.10]{AlbAHHS02}.
On the other hand,
if $\sigma$ has layers of sizes $(a,b)$ for $a,b\ge 2$,
then there exists a near-$\sigma$-optimal sequence of layered permutations where the number of layers is bounded~\cite{Pri97};
more generally, Albert et al.~\cite{AlbAHHS02} showed the following.

\begin{theorem}[{Albert et al.~\cite[Theorem~2.7]{AlbAHHS02}}]
\label{thm:thm27}
If $\sigma$ is a layered permutation such that each layer of $\sigma$ has size at least two, 
then there exist a near-$\sigma$-optimal sequence $(\pi_n)_{n\in\NN}$ and an integer $K\in\NN$ such 
that each $\pi_n$ has at most $K$ layers. 
\end{theorem}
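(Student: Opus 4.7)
The plan is to recast the density-maximization as a continuous optimization on the simplex and show that the supremum is attained at a configuration with boundedly many nonzero parts. For a layered permutation $\pi$ of order $n$ with layer sizes $(t_1,\ldots,t_m)$, a direct count gives
\[
d(\sigma,\pi)=\binom{n}{|\sigma|}^{-1}\sum_{1\le j_1<\cdots<j_k\le m}\prod_{i=1}^k\binom{t_{j_i}}{s_i},
\]
where $(s_1,\ldots,s_k)$ are the layer sizes of $\sigma$. Writing $x_j=t_j/n$ and letting $n\to\infty$, this tends to $\binom{|\sigma|}{s_1,\ldots,s_k}\,F(\vec x)$, where $F(\vec x)=\sum_{1\le j_1<\cdots<j_k\le m}\prod_i x_{j_i}^{s_i}$ and $\vec x$ lies in the simplex $\Delta_m=\{\vec x\ge 0:\sum_j x_j=1\}$. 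The packing density of $\sigma$ is therefore $\binom{|\sigma|}{s_1,\ldots,s_k}\cdot L^*$ with $L^*=\sup_m\max_{\vec x\in\Delta_m}F(\vec x)$. It thus suffices to exhibit a bound $K=K(\sigma)$ such that $L^*$ is attained by some $\vec x^*\in\Delta_K$: taking $\pi_n$ layered of order $n$ with $K$ layers of sizes $\lfloor x_i^* n\rfloor$ (absorbing any small leftover into the largest layer) then yields the desired near-$\sigma$-optimal sequence with at most $K$ layers.

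The key step is a concentration lemma: there exists $\epsilon=\epsilon(\sigma)>0$ such that whenever $\vec x\in\Delta_m$ has more than $K:=\lceil 2/\epsilon\rceil$ nonzero entries, one can merge two suitable adjacent small entries (or transfer the mass of a small entry to a neighbouring large one) to obtain $\vec x'\in\Delta_{m-1}$ with $F(\vec x')\ge F(\vec x)$. The hypothesis that each $s_i\ge2$ enters twice. First, in the binomial expansion $(x_j+x_{j+1})^{s_i}=\sum_\ell\binom{s_i}{\ell}x_j^\ell x_{j+1}^{s_i-\ell}$, the new ``cross'' monomials with $0<\ell<s_i$ have total degree $s_i\ge2$ in $(x_j,x_{j+1})$ and constitute a gain for $F(\vec x')-F(\vec x)$. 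Second, the ``loss'' consists only of those terms of $F(\vec x)$ that used $j$ and $j+1$ as consecutive layers of $\sigma$, and each such term carries a factor $x_j^{s_{i^*}}x_{j+1}^{s_{i^*+1}}$ of total degree at least $4$. For $x_j,x_{j+1}\le\epsilon$ small, the gain dominates. A pigeonhole argument (at most $\lfloor 1/\epsilon\rfloor$ coordinates can exceed $\epsilon$, so if $m>2\lfloor1/\epsilon\rfloor+1$ some adjacent pair is small) lets one iterate to reduce $m$ to at most $K$ without decreasing $F$; compactness of $\Delta_K$ and continuity of $F$ then produce an honest maximizer $\vec x^*\in\Delta_K$ with $F(\vec x^*)=L^*$.

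The main obstacle is establishing the concentration lemma \emph{uniformly} in the remaining coordinates of $\vec x$. The multipliers of the gain and loss terms are sums over different families of index tuples, and a naive bound (one by above, the other by below) can leave a nontrivial gap if the surrounding configuration is unbalanced. The fix is to match contributions term by term: for each choice of the $k-2$ remaining indices outside $\{j,j+1\}$, one compares the corresponding loss monomial $x_j^{s_{i^*}}x_{j+1}^{s_{i^*+1}}\cdot P$ against a specific gain monomial $x_jx_{j+1}\cdot P'$ arising from a related tuple with $P'\ge P$, and concludes that the gain-to-loss ratio is at least of order $1/\epsilon^2$ times a constant depending only on $\sigma$. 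Once the concentration lemma is in hand, the pigeonhole reduction and the construction of $\pi_n$ from $\vec x^*\in\Delta_K$ are routine.
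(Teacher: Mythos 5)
Your reduction to the simplex problem is fine: for layered hosts the occurrence count factorizes exactly as you write, and together with the cited fact that layered patterns admit layered maximizers this identifies the packing density with $\binom{|\sigma|}{s_1,\ldots,s_k}L^*$, so it would indeed suffice to show that $L^*$ is attained on $\Delta_K$ for some $K=K(\sigma)$. (Note that this paper does not prove the statement at all --- it quotes it from Albert et al. --- so the only question is whether your argument stands on its own.) It does not: the concentration lemma, which is the entire content of the theorem, is not established by your sketch, and the specific mechanism you propose is false. The gain and loss in a merge of adjacent entries $x_j,x_{j+1}$ carry different combinatorial multipliers: a gain term in which layer $i$ of $\sigma$ sits in the merged entry is multiplied by the sum over placements of the \emph{other $k-1$} layers of $\sigma$ outside the pair, while a loss term (layers $i,i+1$ at $j,j+1$) is multiplied by the sum over placements of only $k-2$ layers. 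Hence smallness of $x_j,x_{j+1}$ alone gives nothing. Concretely, for $\sigma=2143$ take $\vec x=(\epsilon,\epsilon,y,\ldots,y)$ with $M$ entries $y=(1-2\epsilon)/M$: merging the first two entries changes $F$ by $2\epsilon^2(1-2\epsilon)^2/M-\epsilon^4$, which is negative once $M>2(1-2\epsilon)^2/\epsilon^2$. So ``for $x_j,x_{j+1}\le\epsilon$ the gain dominates'' fails, and with it the claimed gain-to-loss ratio of order $1/\epsilon^2$ depending only on $\sigma$.

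Your proposed repair has the inequality backwards. Matching a loss tuple (which places $k-2$ layers of $\sigma$ outside the pair) to a gain tuple forces you to place one \emph{additional} layer of $\sigma$ in some entry $x_l$ outside the pair, so the companion factor becomes $P'=P\cdot x_l^{s}\le P$, not $P'\ge P$; moreover an admissible entry $l$ (in the right position relative to the rest of the tuple) may not exist, or all available $x_l$ may be far smaller than $\epsilon$, as in the example above. Making such a comparison work is exactly where global structural information is needed: compare Lemmas~\ref{lm:segmentInLimit}--\ref{lm:mergepair} of this paper, where one first proves that any optimizer has $k$ layers of length bounded below by a constant and only then merges pairs that are small relative to that constant, and where even so the comparison requires arithmetic hypotheses on the layer sizes ($\ell_i+\ell_{i+1}\ge\max\{\ell_1,\ell_k\}+1$ in Theorem~\ref{thm:finite}, or the condition of Theorem~\ref{thm:constantBound2}) that are \emph{not} implied by ``all layers have size at least two'' (e.g.\ layer sizes $(5,2,2,5)$). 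So under the hypothesis of Theorem~\ref{thm:thm27} your merging lemma is unproven and quite possibly needs to be replaced by a different operation (e.g.\ collapsing a whole tail of small layers, or arguing only at a maximizer rather than at an arbitrary $\vec x$), which is the substance of Albert et al.'s argument; as it stands, the central step of your proof is missing.
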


Albert et al.~\cite{AlbAHHS02} also stated a conjecture generalizing Theorem~\ref{thm:thm27}.
We remark that the conjecture is stated in~\cite{AlbAHHS02} in terms of layered permutations of ``bounded-type.'' In~\cite[p.~13]{AlbAHHS02}, it is said that ``the bounded case corresponds precisely to when $p_s=\delta(\sigma)$ for some index $s$'' (in the notation of~\cite{AlbAHHS02}). This is equivalent to the existence of a near-$\sigma$-optimal sequence with a bounded number of layers, and so their conjecture is equivalent to the following.

\begin{conjecture}[{Albert et al.~\cite[Conjecture 2.9]{AlbAHHS02}}]
\label{conj:conj29}
If $\sigma$ is a layered permutation with layers of sizes $(\ell_1,\ldots,\ell_k)$ such that
$\ell_1>1$, $\ell_k>1$ and no two consecutive layers of $\sigma$ have size one,
then there exist a near-$\sigma$-optimal sequence $(\pi_n)_{n\in\NN}$ of layered permutations and
an integer $K\in\NN$ such that each $\pi_n$ has at most $K$ layers.
\end{conjecture}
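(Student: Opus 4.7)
My plan is to extend the approach behind Theorem~\ref{thm:thm27}. If $\pi_n$ is a layered permutation with layers of sizes $m_1,\ldots,m_t$ and we set $\alpha_j := m_j/n$, then, asymptotically as $n\to\infty$, the density of $\sigma$ in $\pi_n$ is proportional to
\[
P(\alpha) = \sum_{1 \le j_1 < \cdots < j_k \le t} \prod_{i=1}^k \alpha_{j_i}^{\ell_i},
\]
subject to $\alpha_j \ge 0$ and $\sum_j \alpha_j = 1$. Near-$\sigma$-optimal sequences of layered permutations correspond to sequences of $(\alpha_1,\dots,\alpha_t)$ approaching the supremum of $P$ over probability distributions on $[0,1]$, and bounding the number of layers amounts to showing that this supremum is attained on a finitely supported distribution.

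The first step is to show that in any optimiser, the layers of $\pi_n$ hosting $\ell_1$ and $\ell_k$ are of linear size. Since $\ell_1,\ell_k\ge 2$, these contribute at least quadratically in the corresponding coordinates of $\alpha$, so spreading them across many tiny layers strictly decreases $P$; this type of observation is already present in~\cite{AlbAHHS02} and should extend without difficulty to the present setting.

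The second step, which is the heart of the proof, would eliminate singleton layers of $\sigma$ one at a time. Suppose $\ell_i=1$ with $\ell_{i-1},\ell_{i+1}\ge 2$. One would like to show that, in any maximiser, the host layer of this singleton can be taken to be a single layer (or a constantly bounded number of layers) of $\pi_n$. The intuition is that the quadratic contributions of the adjacent layers of $\sigma$ penalise subdividing their hosts in $\pi_n$, while the singleton itself is indifferent to such subdivisions only to first order; a local merging argument should then reduce the number of layers of $\pi_n$ that host $\ell_i$, and an induction on the number of singletons in $\sigma$ would close the argument.

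The main obstacle is precisely this last step. The singleton contributes linearly in its host coordinate, whereas its multi-point neighbours in $\sigma$ contribute higher powers, so subdividing a host layer has competing effects of different orders that are hard to control globally, especially if one of the multi-point layers of $\sigma$ is very large compared to the others. In such an unbalanced regime, the large layer of $\sigma$ wants to sit in a single huge layer of $\pi_n$, leaving only a small mass for the adjacent singleton, and that small mass may actually be cheaper to fragment into arbitrarily many pieces---suggesting that the conjecture could fail in this regime, consistent with the counterexample announced in the abstract.
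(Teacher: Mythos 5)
The statement you set out to prove is false, and the paper's main contribution is precisely its refutation: Theorem~\ref{thm:main} (proved via its limit version, Theorem~\ref{thm:counterexample}, together with Lemma~\ref{lm:convergence}) shows that for $\sigma$ with layers of sizes $(n,1,\ell_1,\dots,\ell_k)$ and $n$ sufficiently large --- already $(13,1,2)$ works --- the number of layers in \emph{every} near-$\sigma$-optimal sequence of layered permutations tends to infinity, although $\sigma$ satisfies the hypotheses of Conjecture~\ref{conj:conj29}. So no proof can exist, and your proposal correctly stops short of claiming one: the gap you flag in your second step is real and unfixable.

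To pinpoint why the singleton-elimination/merging step fails, the paper argues as follows in the permuton language. Suppose a $\sigma$-optimal layered permuton had finitely many layers, with $x_1,x_2$ the lengths of the first two. Comparing against the permuton with the first two layers swapped, and against shifting mass from the second layer into the first, forces $x_1\ge x_2$ and then $x_1\ge nx_2$ (Claims~\ref{claim:x_1>=x_2} and~\ref{claim:x_1/n}); comparing against merging the first two layers bounds $\Sigma_1\le x_1/n$ (Claim~\ref{claim:Sigma1}); and --- the decisive move --- comparing against the permuton in which the singleton's host layer is replaced by a scaled copy of a permuton with positive density of $\sigma'$ (the pattern $\sigma$ with its first layer deleted) forces $x_1=O(n^{-n/L})$ (Claim~\ref{claim:x1Tiny}). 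Together with the lower bound on the packing density from Claim~\ref{claim:DnotSmall}, this contradicts \eqref{eq:Dcontra} once $n$ is large. The structural intuition matches exactly the ``unbalanced regime'' you identified: $\sigma'$ has a singleton \emph{first} layer, and by Albert et al.~\cite[Proposition~2.10]{AlbAHHS02} maximizers for such patterns need unboundedly many layers, so the small mass hosting the singleton (and everything after it) is genuinely better fragmented into arbitrarily many pieces --- no local merging argument can recover a bounded number of layers. Your first step and the singleton-free case are sound (the latter is Theorem~\ref{thm:thm27}); the paper's positive results (Theorems~\ref{thm:finite} and~\ref{thm:constantBound2}) show how far the conjecture can be salvaged, namely under balance conditions such as $\ell_i+\ell_{i+1}\ge\max\{\ell_1,\ell_k\}+1$, which exclude exactly the regime where your plan breaks down.
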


As evidence for Conjecture~\ref{conj:conj29},
Albert et al.~\cite{AlbAHHS02} stated
the following proposition without proof and claimed that it follows by an argument similar to 
the proof of~\cite[Theorem~2.7]{AlbAHHS02}.

\begin{proposition}[{Albert et al.~\cite[Proposition~2.8]{AlbAHHS02}}]
\label{prop:prop28}
If $\sigma$ is a layered permutation with layers of sizes $(a,1,b)$ where $a>1$ and $b>1$,
then there exists a near-$\sigma$-optimal sequence $(\pi_n)_{n\in\NN}$ of layered permutations and
$K\in\NN$ such that each $\pi_n$ has at most $K$ layers.
\end{proposition}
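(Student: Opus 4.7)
The plan is to mirror the proof of Theorem~\ref{thm:thm27}, working throughout with normalized layer sizes. For a layered permutation $\pi$ with layers of sizes $(x_1,\ldots,x_k)$ and $n=\sum x_i$, I set $y_i=x_i/n$; then
$$d(\sigma,\pi)=\frac{(a+1+b)!}{a!\,b!}\sum_{1\le i<j<l\le k} y_i^{\,a}\,y_j\,y_l^{\,b}+O(1/n),$$
so the problem reduces to showing that the supremum of $F(y):=\sum_{i<j<l}y_i^{\,a}y_jy_l^{\,b}$ over probability vectors $y$ of arbitrary length is attained, to within arbitrarily small error, by a vector of bounded length $K=K(a,b)$. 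Any sequence of layered permutations realising such a vector is then a near-$\sigma$-optimal sequence with at most $K$ layers.

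To bound the length, I would use a merging argument. Fix a small threshold $\varepsilon>0$ and call an entry $y_j$ \emph{large} if $y_j\ge\varepsilon$ and \emph{small} otherwise; there are at most $1/\varepsilon$ large entries, so it suffices to show that two consecutive small entries $y_j,y_{j+1}$ can be merged into a single entry of size $y_j+y_{j+1}$ without decreasing $F$ by more than $o(1)$. After the merge, the middle-role contribution of the merged entry is linear in its mass and thus absorbs both original middle contributions exactly; meanwhile the first-role and last-role contributions are raised to powers $a,b\ge 2$, and the convexity inequalities $u^a+v^a\le(u+v)^a$ and $u^b+v^b\le(u+v)^b$ produce a non-negative gain. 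Iterating this step, and choosing $\varepsilon$ small enough that the total accumulated error stays within $o(1)$, would reduce the number of distinct layer sizes to $O(1/\varepsilon)$ and give the required bound.

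The main obstacle I expect is the accounting of the \emph{mixed} triples in which the two merged layers play different roles, namely those of the form $(j,j+1,l)$ contributing $y_j^{\,a}y_{j+1}y_l^{\,b}$ and those of the form $(i,j,j+1)$ contributing $y_i^{\,a}y_jy_{j+1}^{\,b}$, both of which disappear after the merge. Each such term carries at least two small factors and so is individually at most of order $\varepsilon^{a+1}$ or $\varepsilon^{b+1}$ times an anchor value; however, to conclude a genuinely monotonic merging step one must show that the sum of these losses over all rounds of merging is dominated, uniformly in the configuration of the surviving large layers, by the convexity gains from the first-role and last-role terms. Making this comparison rigorous — especially because the middle role is the only one with a linear exponent, so that small middles cannot be penalized through convexity on their own — is the point at which I would expect the argument to be most delicate and where substantial care is needed to close the bound.
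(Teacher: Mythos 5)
The statement you are trying to prove is false, and the present paper exists largely to disprove it: Proposition~\ref{prop:prop28} is the claim of Albert et al.\ that Theorem~\ref{thm:main} (via its limit form, Theorem~\ref{thm:counterexample}) refutes. Concretely, the computation at the end of Section~\ref{sec:counterexample} shows that for $\sigma$ with layers of sizes $(13,1,2)$ --- which satisfies your hypotheses with $a=13$, $b=2$ --- every $\sigma$-optimal layered permuton has infinitely many layers, and hence (by Lemma~\ref{lm:convergence}) the number of layers tends to infinity in \emph{every} near-$\sigma$-optimal sequence of layered permutations. Equivalently, for each fixed $K$ the maximum of $d(\sigma,\cdot)$ over layered permutations with at most $K$ layers stays bounded away from the packing density. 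So your reduction ``it suffices to show the supremum of $F(y)=\sum_{i<j<l}y_i^a y_j y_l^b$ over probability vectors is approached by vectors of bounded length'' is not a reduction at all: it is equivalent to the (false) statement, and no merging scheme can establish it. The conclusion does hold for some permutations of this shape, e.g.\ $(k,1,k)$, but only via the symmetric hypotheses of Theorem~\ref{thm:constantBound2}, not for general $a,b\ge 2$.

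The place where your argument breaks is exactly the step you flagged as delicate. Because the middle layer of $\sigma$ is a singleton, its exponent in $F$ is linear, so small layers can never be penalized through convexity in the middle role; and when $a$ is large, the convexity gains in the first role, which are of order $(u+v)^a-u^a-v^a$ for small merged masses $u,v$, are astronomically small, while the gains in the last role come multiplied by sums of terms $y_i^a y_{j'}$ that can themselves be tiny. Meanwhile the lost mixed triples of the form $y_i^a\,y_j\,y_{j+1}^b$, with a large anchor $y_i$ in the first role and the two small layers serving as the singleton and the last layer, are \emph{not} dominated by these gains; in the true near-optimal configurations (a large first layer followed by a cascade of ever smaller layers, which is forced by the singleton, cf.\ Proposition~2.10 of Albert et al.\ for patterns whose \emph{first} layer is a singleton) these mixed terms carry a non-negligible share of the density, and merging the small layers strictly decreases $F$. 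This is also why mirroring the proof of Theorem~\ref{thm:thm27} cannot work: that argument uses that \emph{every} layer of $\sigma$ has size at least two, so every role has a convex exponent, which is precisely what fails here.
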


Our first result is a counterexample to Conjecture~\ref{conj:conj29},
which also provides a counterexample to Proposition~\ref{prop:prop28}.

\begin{theorem}
\label{thm:main}
For every $k\in\NN$ and $\ell_1,\ldots,\ell_k$,
there exists $n_0$ such that
if $n\geq n_0$ and $\sigma$ is the layered permutation with layers of sizes $(n,1,\ell_1,\dots,\ell_k)$,
then the number of layers in the permutations of every near-$\sigma$-optimal sequence of layered permutations tends to infinity.
\end{theorem}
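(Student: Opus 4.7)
The plan is to reduce the problem to the analogous statement for the shorter permutation $\tau=(1,\ell_1,\ldots,\ell_k)$, whose first layer is a singleton; by~\cite[Proposition~2.10]{AlbAHHS02}, every near-$\tau$-optimal sequence of layered permutations must have the number of layers tending to infinity. In the limit, the density of $\sigma_n$ (respectively $\tau$) in a layered permutation with $s$ layers of proportions $q=(q_1,\ldots,q_s)$ summing to $1$ is, up to the multinomial factor, the polynomial
\[
F_n(q)=\sum_{1\le j_1<\cdots<j_{k+2}\le s}q_{j_1}^n\,q_{j_2}\prod_{i=1}^{k}q_{j_{i+2}}^{\ell_i},
\qquad
H(q)=\sum_{1\le j_1<\cdots<j_{k+1}\le s}q_{j_1}\prod_{i=1}^{k}q_{j_{i+1}}^{\ell_i}.
\]
Existence of a near-$\sigma_n$-optimal sequence with at most $K$ layers is equivalent to the supremum of $F_n$ restricted to $s\le K$ agreeing with the true supremum $\Phi_n:=\sup_{s,q}F_n(q)$, so the theorem reduces to showing that $\Phi_n$ is not attained at any finite $s$ provided $n$ is large.

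The crucial separation comes from the huge first layer of $\sigma_n$. Writing $L:=1+\sum_i\ell_i=|\tau|$, the function $\phi(\alpha)=\alpha^n(1-\alpha)^L$ is sharply peaked at $\alpha^\star=n/(n+L)>1/2$ once $n\ge L$, so at most one coordinate of $q$ exceeds $1/2$. Factoring $F_n(q)=\sum_{j_1}q_{j_1}^n R_{j_1}(q)$, where $R_{j_1}(q)$ is the polynomial $H$ applied to the suffix $(q_{j_1+1},\ldots,q_s)$, homogeneity yields the bound $R_{j_1}(q)\le(q_{j_1+1}+\cdots+q_s)^L\,H^\star_{s-j_1}$ with $H^\star_m:=\sup_{q\in\Delta_m}H(q)$. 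Combining these estimates and separating the single index $j_1$ with $q_{j_1}>1/2$ (if any) from the remaining "off-peak" indices, one derives an upper bound of the form
\[
F_n(q)\;\le\;\phi(\alpha^\star)\,H^\star_{s-1}\;+\;E_n(q,s),
\]
where $E_n(q,s)$ is exponentially small in $n$. The matching lower bound $\Phi_n\ge\phi(\alpha^\star)\,H^\star$, with $H^\star:=\sup_m H^\star_m$, is obtained from the explicit construction placing a single layer of proportion $\alpha^\star$ at the start of $\pi$ and distributing the remaining mass $1-\alpha^\star$ to approximate a near-$\tau$-optimal configuration with arbitrarily many layers.

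Combining the two bounds gives $\Phi_n-\sup_{s\le K,q\in\Delta_s}F_n(q)\ge\phi(\alpha^\star)(H^\star-H^\star_{K-1})-E_n$, and since $\tau$ begins with a singleton~\cite[Proposition~2.10]{AlbAHHS02} ensures that $H^\star_m<H^\star$ strictly for every $m$, so the main term is positive. The main technical obstacle is to choose a single threshold $n_0=n_0(k,\ell_1,\ldots,\ell_k)$ that works against every $K$ simultaneously: the gap $H^\star-H^\star_{K-1}$ shrinks as $K\to\infty$, so the error $E_n$ must be controlled uniformly in the number of layers. I would handle this by replacing the naive bound $E_n\le s\,H^\star(1/2)^n$ with a geometric estimate of the off-peak contribution $\sum_{j_1:\,q_{j_1}\le1/2}q_{j_1}^n R_{j_1}(q)$, exploiting that the $q_{j_1}$'s are nonnegative and sum to at most $1$ so that $\sum_{j_1}q_{j_1}^n\le 2^{1-n}$ once the unique large coordinate is excluded. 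Since $\phi(\alpha^\star)=\Theta(n^{-L})$ decays only polynomially in $n$ while the refined $E_n$ decays exponentially and independently of $s$, the gap becomes positive for all $s$ as soon as $n$ exceeds a threshold depending only on $L$.
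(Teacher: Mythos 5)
Your reduction to the limit setting and the two bounds you state are essentially sound: for $q\in\Delta_s$ with $s\le K$ one does get $F_n(q)\le \phi(\alpha^\star)\,H^\star_{K-1}+2^{1-n}$ (the unique coordinate above $1/2$ contributes at most $\phi(\alpha^\star)H^\star_{s-1}$ by homogeneity, the rest at most $2^{1-n}$), and the one-big-layer construction gives $\Phi_n\ge\phi(\alpha^\star)H^\star$. The genuine gap is in the final step. The theorem requires a single $n_0$ (depending only on $\ell_1,\dots,\ell_k$) such that, for each fixed $n\ge n_0$, the strict inequality $\sup_{s\le K}F_n<\Phi_n$ holds for \emph{every} $K$. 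Your comparison yields positivity of the gap only when $2^{1-n}<\phi(\alpha^\star)\bigl(H^\star-H^\star_{K-1}\bigr)$. But for fixed $n$ the error $2^{1-n}$ is a fixed positive constant, while $H^\star_{K-1}\uparrow H^\star$ (the sequence $H^\star_m$ is nondecreasing with supremum $H^\star$ by your own definition), so $H^\star-H^\star_{K-1}\to 0$ and the required inequality fails for all large $K$. The fix you propose addresses only the dependence of the error on $s$; the obstruction is not the error's $s$-dependence but the fact that the \emph{main} term $\phi(\alpha^\star)(H^\star-H^\star_{K-1})$ vanishes as $K\to\infty$ while any fixed-$n$ error bound stays bounded away from zero. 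Hence the concluding sentence (``the gap becomes positive for all $s$ as soon as $n$ exceeds a threshold depending only on $L$'') is not justified, and no choice of $n_0$ can come out of this comparison of suprema as it stands.

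The paper avoids exactly this trap by never comparing against a $K$-dependent quantity such as $H^\star_{K-1}$. It assumes a $\sigma$-optimal layered permuton $\Pi$ with finitely many layers of lengths $x_1,\dots,x_K$ ($K$ arbitrary) and derives $K$-independent structural constraints from local modifications: swapping the first two layers gives $x_1\ge x_2$; a first-derivative (mass-shifting) argument gives $x_1\ge nx_2$; merging the first two layers gives $\Sigma_1\le x_1/n$; and replacing the second layer by a scaled optimal permuton for the truncated pattern $(1,\ell_1,\dots,\ell_k)$ — using that the second layer of $\sigma$ is a singleton — forces $x_1$ to be superpolynomially small in $n$. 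Together these give $d(\sigma,\Pi)\le A\bigl(3x_1^n+x_1/n\bigr)\to 0$, contradicting a lower bound $d(\sigma,\Pi)\ge d_0>0$ that is independent of $n$ and of $K$. If you want to keep your framework, you would need an argument of this perturbative kind for any bounded-layer near-maximizer (e.g., showing its first layer is near $\alpha^\star$ and that appending or rebalancing layers strictly improves it), i.e., bounds that do not degrade as the number of layers of the competitor grows.
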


In particular, we show (see the discussion at the end of Section~\ref{sec:counterexample}),
that the layered permutation with three layers of sizes $(13,1,2)$ satisfies the conclusion of Theorem~\ref{thm:main},
and hence it fails to satisfy the conclusion of Proposition~\ref{prop:prop28} despite satisfying its hypothesis.
We remark that Proposition~\ref{prop:prop28} was used in the paper of H\"{a}st\"{o}~\cite{Has02}
for layered permutations with layers of sizes $(k,1,k)$ for $k\geq2$.
Fortunately, while Proposition~\ref{prop:prop28} is false in general,
its conclusion holds for this family of permutations.
This is a special case of
our positive results (Theorems~\ref{thm:finite} and~\ref{thm:constantBound2}) stated below.

We state and prove some of our results using the language from the theory of permutation limits
which was introduced by Hoppen et al.~\cite{HopKMRS13,HopKMS11a} and
applied in various problems concerning permutations e.g.~\cite{BalHLPUV15,ChaKNPSV20,GleGKK15,KraP13,Kur20}.
At the center of this theory is the notion of a \emph{permuton},
which is an analytic representation of a large permutation.
The link between the finite and limit settings is presented in Section~\ref{sec:limits}.
In particular,
we show that the existence of a near-$\sigma$-optimal sequence of layered permutations with bounded number of layers
is equivalent to the existence of a $\sigma$-optimal layered permuton with a bounded number of layers.

To complement the negative result of Theorem~\ref{thm:main},
we offer two sufficient conditions on a layered permutation $\sigma$ for the
existence of a near-$\sigma$-optimal sequences whose elements are layered permutations
with a bounded number of layers.
In fact, in some cases, we can obtain a seemingly stronger
conclusion that the number of layers in any $\sigma$-optimal permutation is bounded
by a universal constant depending on $\sigma$ only.
In Section~\ref{sec:concl}, we ask whether these properties are, in fact, equivalent. 
We remark that the importance of the presence of layers of size one in relation
to the existence of large $\sigma$-optimal permutations with a bounded number of layers
is consistent with additional results on layered permutations, 
cf.~\cite{Pri97, AlbAHHS02, Has02, War04, War06}.

\begin{theorem}
\label{thm:finite}
Let $\sigma$ be a layered permutation with layers of sizes $(\ell_1,\dots,\ell_k)$ and $k\ge 2$.
If $\ell_1\ge 2$, $\ell_k\ge 2$ and $\ell_i+\ell_{i+1}\ge\max\{\ell_1,\ell_k\}+1$ for every $i\in [k-1]$,
then every $\sigma$-optimal layered permuton has finitely many layers.
\end{theorem}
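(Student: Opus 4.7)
My plan is to argue by contradiction. Assume that $\pi$ is a $\sigma$-optimal layered permuton with infinitely many layers, and let $a_1,a_2,\ldots$ denote the positive masses of its layers ordered from left to right, so that $\sum_i a_i=1$. Using the formula
\[
d(\sigma,\pi)=\binom{|\sigma|}{\ell_1,\ldots,\ell_k}\sum_{m_1<\cdots<m_k}\prod_{j=1}^k a_{m_j}^{\ell_j},
\]
I aim to produce a layered permuton $\pi^{(i)}$ of strictly larger $\sigma$-density by merging two consecutive layers of small mass in $\pi$ into a single layer, contradicting the assumed optimality.

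For an index $i$, set $p=a_i$ and $q=a_{i+1}$, and let $\pi^{(i)}$ be the permuton obtained from $\pi$ by replacing these two consecutive layers with a single layer of mass $p+q$. Partitioning the embeddings of $\sigma$ into $\pi$ according to whether they use position $i$, position $i+1$, both, or neither, the change in the density equals
\[
d(\sigma,\pi^{(i)})-d(\sigma,\pi)=\binom{|\sigma|}{\ell_1,\ldots,\ell_k}\bigl(\Sigma_+-\Sigma_-\bigr),
\]
with
\[
\Sigma_+=\sum_{j=1}^k\bigl[(p+q)^{\ell_j}-p^{\ell_j}-q^{\ell_j}\bigr]A_{i,j},\qquad \Sigma_-=\sum_{j=1}^{k-1}p^{\ell_j}q^{\ell_{j+1}}B_{i,j},
\]
where $A_{i,j},B_{i,j}\ge 0$ encode placements of the remaining layers of $\sigma$ at positions outside $\{i,i+1\}$ and depend only on the other layer masses.

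Since $\sum a_i=1$ and infinitely many $a_i$ are positive, $a_i\to 0$, so for any $\delta>0$ I can choose $i$ such that $p,q<\delta$ while at least $k-1$ layers of $\pi$ lie strictly before $i$ and strictly after $i+1$; the latter guarantees $A_{i,1}$ and $A_{i,k}$ are positive. Using the elementary inequality $(p+q)^{\ell_j}-p^{\ell_j}-q^{\ell_j}\ge \ell_j\max(p,q)^{\ell_j-2}pq$ (valid for $\ell_j\ge 2$) together with $\ell_1,\ell_k\ge 2$, I obtain $\Sigma_+\ge c\cdot pq\cdot\max(p,q)^{\min\{\ell_1,\ell_k\}-2}$ for a positive constant $c$ depending on $\pi$. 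In parallel, each term of $\Sigma_-$ is at most $pq\cdot\max(p,q)^{\ell_j+\ell_{j+1}-2}B_{i,j}$, and the hypothesis $\ell_j+\ell_{j+1}\ge\max\{\ell_1,\ell_k\}+1$ yields $\Sigma_-\le M\cdot pq\cdot\max(p,q)^{\max\{\ell_1,\ell_k\}-1}$ for some constant $M$. Since $\min\{\ell_1,\ell_k\}-2<\max\{\ell_1,\ell_k\}-1$, the positive sum dominates once $\max(p,q)$ is sufficiently small, yielding the strict increase $d(\sigma,\pi^{(i)})>d(\sigma,\pi)$ and contradicting the $\sigma$-optimality of $\pi$.

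The main obstacle in carrying out this plan is the uniformity of the constant $c$: the coefficients $A_{i,j}$ depend on $i$, and if $\pi$ has a rapidly decaying tail of layer masses they may shrink together with $p,q$ as $i$ moves outwards, degrading $\Sigma_+$ faster than anticipated. I expect to handle this by choosing $i$ so that at least $k-1$ layers of mass bounded below by a fixed positive constant lie on each side of $\{i,i+1\}$, which is possible whenever $\pi$ has at least $2(k-1)$ sufficiently large layers; in the opposite case, an auxiliary reduction that first merges the outermost tails of very small layers into the nearest large layers brings us back to the favorable situation.
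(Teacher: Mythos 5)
Your central pairwise estimate is sound: the gain $(p+q)^{\ell_j}-p^{\ell_j}-q^{\ell_j}\ge \ell_j\max(p,q)^{\ell_j-2}pq$ against losses of order $p^{\ell_i}q^{\ell_{i+1}}\le pq\max(p,q)^{\ell_i+\ell_{i+1}-2}$, combined with $\ell_i+\ell_{i+1}\ge\max\{\ell_1,\ell_k\}+1$, is essentially the inequality at the heart of the paper's Lemma~\ref{lm:shortPairInLimit}. The gaps are in the permuton-specific parts of the argument. First, you assume the layers form a sequence $a_1,a_2,\dots$ ordered left to right with $\sum_i a_i=1$. A layered permuton may carry positive mass on trivial (single-point) layers -- identity-like stretches -- in which case $\sum_i a_i<1$ and your density formula is incomplete (points in the diagonal part can realize singleton layers of $\sigma$, which the hypotheses of the theorem allow, e.g.\ layer sizes $(2,1,2)$). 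Ruling this out is not definitional: the paper proves it only for $\sigma$-optimal permutons, via Lemma~\ref{lm:segmentInLimit} and Lemma~\ref{lm:noTrivial}, and your proposal silently assumes it.

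More seriously, the operation ``merge two consecutive layers of small mass'' need not exist. The non-trivial layers of a layered permuton need not contain any two adjacent intervals and need not even have immediate successors: they can be densely ordered (take the closures of the complementary intervals of a measure-zero Cantor set -- no two share an endpoint, and between any two there are infinitely many others). For such a $\Pi$ your argument never gets started, and showing that a $\sigma$-optimal permuton cannot look like this is precisely the difficulty; $a_i\to 0$ does not give you an adjacent small pair. The paper sidesteps this by collapsing an entire \emph{segment} (a union of possibly infinitely many small layers lying between long layers) into a single layer, and the gain/loss comparison in Lemma~\ref{lm:shortPairInLimit} is then run over all pairs $j_1<j_2$ of layers inside the segment; your computation is the two-layer special case and does not extend without that idea. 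Finally, your fallback for guaranteeing $k-1$ layers of mass bounded below on a side of the pair (``first merge the outermost tails of very small layers into the nearest large layers'') is not a legitimate move as stated: merging small layers into a large one is a different modification whose effect on $d(\sigma,\cdot)$ your estimates do not control, so you would have to prove separately that it does not decrease the density. (Note also that you only need $k-1$ suitably long layers on \emph{one} side; once trivial layers are excluded, a $\sigma$-optimal permuton with infinitely many layers has $2k-3$ layers of length at least some $\varepsilon>0$, and the paper's pigeonhole then does the job without any auxiliary reduction.)
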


\begin{theorem}
\label{thm:constantBound2}
Let $\sigma$ be a layered permutation with layers of sizes $(\ell_1,\dots,\ell_k)$ and $k\ge 2$.
If $\ell_1=\ell_k\ge 2$ and
every two consecutive layers of $\sigma$ contain a layer of size $\ell_1=\ell_k$,
then there exists an integer $K\in\NN$ such that
the number of layers of any $\sigma$-optimal layered permutation is at most $K$.
\end{theorem}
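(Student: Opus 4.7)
The plan is to derive Theorem~\ref{thm:constantBound2} from Theorem~\ref{thm:finite} via a compactness argument, supplemented by a local improvement step that exploits the symmetry $\ell_1=\ell_k$. We first observe that the hypothesis of Theorem~\ref{thm:constantBound2} implies the hypothesis of Theorem~\ref{thm:finite}: since every pair of consecutive layers of $\sigma$ contains a layer of size $\ell_1=\ell_k$, for each $i\in[k-1]$ at least one of $\ell_i,\ell_{i+1}$ equals $\ell_1$, and hence $\ell_i+\ell_{i+1}\ge \ell_1+1=\max\{\ell_1,\ell_k\}+1$. Therefore, every $\sigma$-optimal layered permuton has finitely many layers.

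The next step is to upgrade ``finitely many'' to a uniform bound $K^{*}$ on the number of layers across all $\sigma$-optimal layered permutons. Suppose, for contradiction, that there exists a sequence $(\mu_n)$ of $\sigma$-optimal layered permutons whose numbers of layers tend to infinity. By compactness of the space of permutons, pass to a subsequential weak-$*$ limit $\mu$; since $\nu\mapsto d(\sigma,\nu)$ is continuous, the limit $\mu$ is also $\sigma$-optimal and hence, by the preceding step, has only finitely many layers, say $M$. For large $n$, the permuton $\mu_n$ thus has many more than $M$ layers, so at least one of its layers must be of arbitrarily small size. The plan is to apply a local perturbation that removes such a tiny layer and redistributes its mass to the adjacent larger layers, strictly increasing $d(\sigma,\cdot)$ and contradicting the $\sigma$-optimality of $\mu_n$. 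The symmetry $\ell_1=\ell_k$, together with the condition that every two consecutive layers of $\sigma$ contain a layer of size $\ell_1$, is meant to ensure that the first-order variation of $d(\sigma,\cdot)$ under this redistribution has a definite, strictly positive sign.

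Finally, the uniform bound is transferred to finite permutations via the permuton--permutation correspondence of Section~\ref{sec:limits}: if a sequence $(\pi_n)$ of $\sigma$-optimal layered permutations had unboundedly many layers, then the associated permutons would converge, along a subsequence, to a $\sigma$-optimal layered permuton with more than $K^{*}$ layers, contradicting the uniform permuton bound. The main obstacle is precisely the local improvement lemma underlying the second step: making it quantitative requires a careful gradient computation for $d(\sigma,\cdot)$ under the redistribution of mass between adjacent layers, with the hypothesis on the layer structure of $\sigma$ used to guarantee that the leading-order variation strictly favors the perturbation whenever the removed layer is small enough relative to its neighbors. Carrying out this combinatorial book-keeping and ruling out degeneracies where the gain is only second order is the crux of the argument.
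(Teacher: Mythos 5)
Your opening observation is correct and worthwhile: the hypotheses of Theorem~\ref{thm:constantBound2} do imply those of Theorem~\ref{thm:finite} (for each $i$ one of $\ell_i,\ell_{i+1}$ equals $\ell_1=\ell_k$, so $\ell_i+\ell_{i+1}\ge\ell_1+1$), hence every $\sigma$-optimal layered permuton has finitely many layers. But the rest of the plan has two genuine gaps. The most serious is the final transfer step: a uniform bound on the number of layers of $\sigma$-optimal \emph{permutons} does not imply a bound for $\sigma$-optimal \emph{permutations}. Lemma~\ref{lm:convergence} only goes one way: if the finite permutations have at most $k$ layers then so does the limit; the converse fails badly, as the paper itself notes with the sequence consisting of one layer of length $n^2$ and $n$ singleton layers, which converges to a permuton with finitely many layers. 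So a sequence of $\sigma$-optimal permutations with unboundedly many (necessarily tiny) layers can converge to a permuton with few layers, and your claimed contradiction "the limits would have more than $K^{*}$ layers" never materializes. Any correct proof must rule out many tiny layers at the \emph{finite} level, since $\sigma$-optimality of $\pi_n$ is an exact maximization while tiny layers affect the density only negligibly in the limit. This is exactly the role of Lemma~\ref{lm:mergepair} in the paper: merging two consecutive layers of size at most $c|\pi_n|$ never decreases the density, and strictly increases it if the merged layer has size at least $\ell_1$; combined with optimality of $\pi_n$ this forces every merged block to have size less than $\ell_1$, which bounds the number of layers of $\pi_n$ by $\left(\floor{2/c}+1\right)(\ell_1-1)$.

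The second gap is the "crux" you defer: the local improvement step is not proved, and as stated it is doubtful. You propose deleting a tiny layer of an optimal permuton and redistributing its mass to the adjacent \emph{larger} layers, claiming a strictly positive first-order gain. The paper's analogous statement is considerably more delicate: Lemma~\ref{lm:mergepair} merges two \emph{consecutive small} layers (not a small layer into a big one), yields only a non-strict inequality in general (strictness only when the merged layer reaches size $\ell_1$), and crucially requires the presence of $k$ layers of size at least $C|\pi|$, which must first be guaranteed — this is Lemma~\ref{lm:klayers}, obtained via Lemma~\ref{lm:segmentInLimit} and a limit argument, and it is the only place where permutons enter the paper's proof of Theorem~\ref{thm:constantBound2}. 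Without an analogue of these two lemmas, the sign of your variation is not established (the positive term you need is of the form $\alpha^{\ell_1-1}\beta+\alpha\beta^{\ell_1-1}$ coming from patterns whose $\ell_1$-sized layer straddles the two merged small layers, and it must be shown to dominate the loss terms $\alpha^{\ell_i}\beta^{\ell_{i+1}}$ from patterns using both small layers separately — a genuine computation, not a formality). Note also that the paper never proves, and does not need, your intermediate claim of a uniform bound on the number of layers of all $\sigma$-optimal permutons; it bounds the finite optimizers directly. So the proposal, as written, neither fills its central lemma nor would its surrounding compactness framework deliver the theorem even if that lemma were supplied in the form you state.
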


\section{Permutation limits}
\label{sec:limits}

In this section, we introduce notions from the theory of permutation limits used throughout the paper, and 
establish a link between the number of layers of a pattern maximizer in the limit and finite settings.
Recall that the density of a permutation $\sigma$ in a permutation $\pi$ of order $n\geq|\sigma|$
is the probability that $|\sigma|$ randomly chosen distinct elements of $\pi$ induce $\sigma$.
A sequence $(\pi_n)_{n\in\NN}$ of permutations is \emph{convergent}
if their orders $|\pi_n|$ tend to infinity and $d(\sigma,\pi_n)$ converges for every permutation $\sigma$.
By compactness, 
every sequence of permutations with orders tending to infinity has a convergent subsequence.

Limits of convergent sequences of permutations can be represented by an analytic object,
which we now introduce;
a more thorough introduction can be found in~\cite{HopKMRS13,HopKMS11a},
also see~\cite{KraP13} where the limit object was viewed as the measure for the first time.
A \emph{permuton} is a probability measure $\Pi$ on the $\sigma$-algebra of Borel subsets from $[0,1]^2$ that
has uniform marginals, i.e.,
\[\Pi([a,b]\times[0,1]) = \Pi([0,1]\times[a,b]) = b - a\]
for all $0\le a\le b\le 1$.
If $\Pi$ is a permuton, then a \emph{$\Pi$-random permutation} $\sigma$ of order $m$
is obtained by sampling $m$ points according to $\Pi$,
sorting them according to their $x$-coordinates,
say $(x_1,y_1),\ldots,(x_m,y_m)$ for $x_1<\cdots<x_m$ (note that the $x$-coordinates are pairwise distinct with probability one), and
defining $\sigma$ so that $\sigma(i)<\sigma(j)$ if and only if $y_i<y_j$ for $i,j\in [m]$.
The \emph{density} of a permutation $\sigma$ in a permuton $\Pi$, denoted by $d(\sigma,\Pi)$,
is the probability that a $\Pi$-random permutation of order $|\sigma|$ is $\sigma$.

We say that a permuton $\Pi$ is a \emph{limit} of a convergent sequence $(\pi_n)_{n\in\NN}$ of permutations
if 
\[d(\sigma,\Pi)=\lim_{n\to\infty}d(\sigma,\pi_n)\]
for every permutation $\sigma$.
Every convergent sequence of permutations has a limit and this limit is unique.
On the other hand, 
a sequence of $\Pi$-random permutations with increasing orders is convergent 
and has $\Pi$ as its limit with probability one.

Recall that the \emph{support} of a measure on a set $X$
is the set of all points $x\in X$ such that every open set containing $x$ has a positive measure.
A permuton $\Pi$ is \emph{layered}
if there exists a (not necessarily finite) collection $\JJ$ of internally disjoint closed subintervals of $[0,1]$ such that
$(x,y)$ lies in the support of $\Pi$ if $x$ belongs to an interval $[a,b]\in\JJ$ and $y=b-(x-a)$,
i.e., the support of $\Pi$ is a union of intervals with slope $-1$ centered along the main diagonal of the square $[0,1]^2$ such that
their projections on the horizontal axis are internally disjoint.
Observe that the collection $\JJ$ (if it exists) is uniquely determined by the permuton $\Pi$.
We will also refer to intervals in $\JJ$ as the \emph{layers} of the permuton $\Pi$,
those with positive length as \emph{non-trivial layers}, and
those with length zero as \emph{trivial layers}. 
The \emph{number of layers} of a layered permuton $\Pi$
is the cardinality of $\JJ$;
note that the number of layers of a layered permuton may be (countably or uncountably) infinite. 
For example, the permuton supported on the line $y=x$ is layered with uncountably many trivial layers.
On the other hand, the permuton supported on the line $y=1-x$ has only one layer. An example of a layered permuton with countably many layers
is the permuton with layers $[1-2^{-k+1},1-2^{-k}]$, $k\in\NN$, and $[1,1]$ (note that
the point $(1,1)$ indeed belongs to the support).
See Figure~\ref{fig:layered_permuton} for the visualization of the three examples of layered permutons that we have just given and
some additional examples of layered permutons.
Observe that if the number of layers is finite, then each layer is non-trivial.
Analogously to the finite case, a permuton $\Pi$ is layered if and only if $d(231,\Pi)=0$ and $d(312,\Pi)=0$;
in particular, this can be derived from Lemma~\ref{lm:convergence} proven below.

\begin{figure}[!ht]
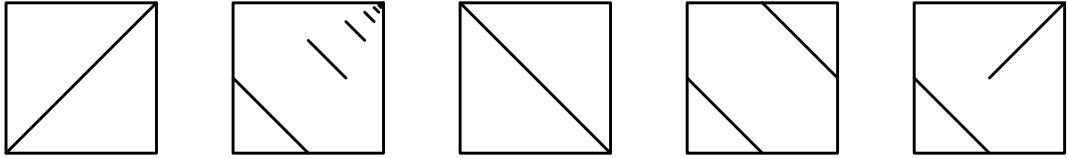

\begin{center}
\epsfbox{layer3-1.mps}
\hskip 8mm
\epsfbox{layer3-2.mps}
\hskip 8mm
\epsfbox{layer3-3.mps}
\hskip 8mm
\epsfbox{layer3-4.mps}
\hskip 8mm
\epsfbox{layer3-5.mps}
\end{center}
\caption{Examples of layered permutons.
         The support of each of the permutons is drawn in the square $[0,1]^2$.
	 The origin of the coordinate system is in the bottom left corner.}
\label{fig:layered_permuton}
\end{figure}

The lemma that we now state provides a key link between layered permutations and layered permutons.
We remark that it is not hard to construct a convergent sequence of layered permutations such that
the number of their layers tends to infinity
but the limit permuton has a finite number of layers;
for example, consider the sequence $\pi_1,\pi_2,\dots$ such that $\pi_n$ consists of one layer of length $n^2$ and  $n$ singleton layers. 

\begin{lemma}
\label{lm:convergence}
Let $(\pi_n)_{n\in\NN}$ be a convergent sequence of permutations and
let $\Pi$ be its limit permuton.
If the permutation $\pi_n$ is layered for every $n\in\mathbb{N}$, then $\Pi$ is layered.
Moreover, if there exists $k\in\NN$ such that the number of layers of each $\pi_n$ is at most $k$,
then the number of layers of $\Pi$ is at most $k$.
\end{lemma}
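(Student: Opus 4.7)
The plan is to translate the layered hypothesis on each $\pi_n$ into the vanishing of certain pattern densities in the limit permuton $\Pi$, use those densities to constrain the support $S:=\mathrm{supp}(\Pi)$, and then read off the layered structure of $\Pi$ from the geometry of $S$. Since each $\pi_n$ is layered, it avoids the patterns $231$ and $312$, so $d(231,\pi_n)=d(312,\pi_n)=0$; convergence of $(\pi_n)$ to $\Pi$ therefore gives $d(231,\Pi)=d(312,\Pi)=0$. For the ``moreover'' part, a layered permutation with at most $k$ layers has longest increasing subsequence of length at most $k$ (at most one point per layer), so $\pi_n$ additionally avoids $12\cdots(k+1)$, whence $d(12\cdots(k+1),\Pi)=0$ by the same argument.

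I would then prove the standard ``support avoids the pattern'' principle: if $d(\tau,\Pi)=0$ for a pattern $\tau$ of order $m$, then no $m$ points of $S$ with pairwise distinct $x$- and $y$-coordinates form $\tau$. Indeed, if such $p_1,\ldots,p_m\in S$ existed, strictness of the inequalities defining $\tau$ would allow small product neighborhoods $U_i\ni p_i$ with disjoint $x$- and $y$-projections such that any selection $q_i\in U_i$ still forms $\tau$ when sorted by $x$; each $\Pi(U_i)>0$ since $p_i\in S$, so $\Pi(U_1)\cdots\Pi(U_m)>0$ would contribute positively to $d(\tau,\Pi)$, contradicting $d(\tau,\Pi)=0$. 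Applied to our setting, $S$ contains no three points in strict $231$- or $312$-configuration and, in the bounded case, no $k+1$ points in strict increasing configuration.

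The main obstacle is extracting the layered structure of $\Pi$ from this support-level pattern avoidance. Given an increasing pair $p,q\in S$ with $x_p<x_q$ and $y_p<y_q$, the $231$/$312$-avoidance applied to triples with a third point of $S$ to the left of $p$ or to the right of $q$ forces $S$ to avoid the strict interior of the corner regions $[0,x_p)\times(y_q,1]$ and $(x_q,1]\times[0,y_p)$, squeezing $S$ along a decreasing staircase on the main diagonal. For each $p\in S$ I would then set $c_p:=x_p+y_p$ and take $[a_p,b_p]$ to be the maximal interval containing $x_p$ such that the anti-diagonal segment $\{(x,c_p-x):x\in[a_p,b_p]\}$ lies in $S$; the resulting collection $\JJ$ (as $p$ ranges over $S$) then exhibits $\Pi$ as a layered permuton. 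The uniform marginal condition is essential here: it forces $\Pi$-mass at every $x$-coordinate of $[0,1]$, which combined with the staircase squeeze and the closedness of $S$ pins each local piece to slope exactly $-1$ and ensures that each layer is a full closed segment rather than a sparser subset (e.g.\ a Cantor-like set). For the ``moreover'' part, if $\Pi$ had at least $k+1$ layers then selecting an interior point from $k+1$ distinct anti-diagonals would yield a strictly increasing $(k+1)$-tuple in $S$, contradicting the $12\cdots(k+1)$ avoidance from Step~2; hence $\Pi$ has at most $k$ layers.
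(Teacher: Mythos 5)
Your route is genuinely different from the paper's. The paper never argues through the vanishing of $d(231,\Pi)$ and $d(312,\Pi)$: it associates with each $\pi_n$ a step layered permuton $\Pi_n$, extracts by compactness a subsequence along which the endpoints of the $i$-th longest layers converge, defines the candidate layered limit explicitly from those limit intervals (with the diagonal filling the rest), and verifies that this permuton is the limit of $(\pi_n)$ by comparing $d(\sigma,\pi_n)$ with $d(\sigma,\Pi_n)$ and by checking convergence of rectangle measures. Indeed, the paper states the characterization ``$\Pi$ is layered iff $d(231,\Pi)=d(312,\Pi)=0$'' as a \emph{consequence} of this lemma, so you are attempting to prove the hard direction of that equivalence directly. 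That is legitimate in principle, and several of your reductions are sound: density zero passes to the limit permuton; zero density of a pattern forbids strict copies of it among support points with distinct coordinates; and permutations with at most $k$ layers avoid $12\cdots(k+1)$, which, once layeredness of $\Pi$ is known, bounds its number of layers by picking one suitable point from each layer.

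The genuine gap is the central structural step: from ``the support $S$ contains no strict $231$ or $312$'' you assert, rather than prove, that $S$ decomposes into closed anti-diagonal segments centered on the main diagonal. The mechanism you invoke does not do the work: corner-avoidance coming from increasing pairs only confines $S$ to a staircase-shaped region and cannot by itself pin the slope $-1$, the completeness of each segment, or the symmetry about the diagonal (the definition requires the layer over $[a,b]$ to run exactly from $(a,b)$ to $(b,a)$, and your construction of $[a_p,b_p]$ never addresses why $c_p=a_p+b_p$). What actually does the work is an analysis of \emph{decreasing} pairs combined with uniform marginals: for a decreasing pair $p,q\in S$, pattern avoidance confines support points with $x$-coordinate strictly between $x_p$ and $x_q$ to $[x_p,x_q]\times[y_q,y_p]$ and symmetrically for the horizontal strip, so the marginal identities force $x_q-x_p=y_p-y_q$; applying this to the pairs $(p,r)$ and $(r,q)$ places every such point $r$ on the line $x+y=x_p+y_p$, and since the vertical strip carries measure $x_q-x_p$ concentrated on that segment (the complement of $S$ is null), the whole closed segment must lie in $S$. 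One must then still show that each maximal segment over $[a,b]$ on a line $x+y=c$ satisfies $c=a+b$ (another marginal count, using that support points left of $a$ have $y\le c-b$ and those right of $b$ have $y\ge c-a$), that support points contained in no decreasing pair lie on the diagonal $y=x$, and that distinct maximal segments have internally disjoint projections; moreover, points sharing an $x$- or $y$-coordinate genuinely occur in layered permutons (e.g.\ the endpoints $(a,b)$ and $(c,b)$ of consecutive layers $[a,b]$, $[b,c]$), so the pattern arguments need care at these boundary cases. None of this appears in your proposal, and until it is carried out the first assertion of the lemma — and hence the ``moreover'' part, which depends on it — is not established.
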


\begin{figure}[!ht]
\begin{center}
\epsfbox{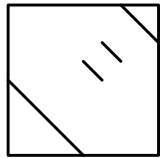}
\end{center}
\caption{The support of the layered permuton associated with the permutation $43215687$ as in the proof of Lemma~\ref{lm:convergence}.
	 The origin of the coordinate system is in the bottom left corner.}
\label{fig:convergence}
\end{figure}

\begin{proof}
We first associate each permutation $\pi_n$ with a permuton $\Pi_n$ as follows (see Figure~\ref{fig:convergence} for an example):
the permuton $\Pi_n$ is the unique (layered) permuton such that its support is the union 
\[\bigcup_{i\in[|\pi_n|]}\left\{\left(\frac{i-1+t}{|\pi_n|},\frac{\pi_n(i)-t}{|\pi_n|}\right),t\in [0,1]\right\}.\]
Informally speaking, the permuton $\Pi_n$ is the unique layered permuton
with the layers of the same relative size and order as in the permutation $\pi_n$.
Observe that the number of layers of each $\Pi_n$ is finite.

For $n\in\NN$ and $i\in\NN$,
let $J_{n,i}$ be the $i$-th longest layer of the permuton $\Pi_n$ if $i$ is at most the number of layers of $\Pi_n$ (order layers of the same length arbitrarily);
we set $J_{n,i}$ to be $[1,1]$ if $i$ exceeds the number of layers of the permuton $\Pi_n$.
Consider an increasing sequence $(n_k)_{k\in\NN}$ such that for every $i\in\NN$,
the sequences $(\min J_{n_k,i})_{k\in\NN}$ and $(\max J_{n_k,i})_{k\in\NN}$,
i.e., the sequences of the left and right end points of intervals $J_{n_k,i}$, converge;
the sequence $(n_k)_{k\in\NN}$ exists by compactness.
For $i\in\NN$, let $a_i$ and $b_i$ be the limits of the sequences $(\min J_{n_k,i})_{k\in\NN}$ and $(\max J_{n_k,i})_{k\in\NN}$, and
we define $\Pi$ to be the unique layered permuton whose support
is the closure of the set containing all points $(x,y)$ such that
\begin{itemize}
\item either $x$ is contained in a non-trivial interval $[a_i,b_i]$ for some $i\in\NN$ and $y=b_i-(x-a_i)$, or
\item $x$ is not contained in any interval $[a_i,b_i]$ and $y=x$.
\end{itemize}
We will show that the permuton $\Pi$ is the limit permuton of the sequence $(\pi_n)_{n\in\NN}$.

As the first step to show that $\Pi$ is the limit of $(\pi_n)_{n\in\NN}$,
we observe that 
\begin{equation}
\left|d(\sigma,\Pi_n)-d(\sigma,\pi_n)\right|\le\frac{|\sigma|^2}{|\pi_n|}\label{eq:condprob}
\end{equation}
for every permutation $\sigma$.
Indeed,
if $|\sigma|\le|\pi_n|$,
then the probability that $|\sigma|$ randomly chosen points of $\pi_n$ induce $\sigma$
is equal to the probability that the $\Pi_n$-random permutation of order $|\sigma|$ is $\sigma$
conditioned on no two of the $|\sigma|$ points defining the $\Pi_n$-random permutation
being chosen from the same interval $[(i-1)/|\pi_n|,i/|\pi_n|]$ for any $i\in [|\pi_n|]$.
Since the probability that two of the $|\sigma|$ points defining the $\Pi_n$-random permutation
are chosen from the same interval $[(i-1)/|\pi_n|,i/|\pi_n|]$ for some $i\in [|\pi_n|]$
is at most $\binom{|\sigma|}{2}|\pi_n|^{-1}$,
the estimate \eqref{eq:condprob} follows.

We next establish that
\begin{equation}
\lim_{k\to\infty}d(\sigma,\Pi_{n_k})=d(\sigma,\Pi)\label{eq:Pi}
\end{equation}
for every permutation $\sigma$.
By~\cite[Theorem~21]{Mel}, \eqref{eq:Pi} is equivalent to
\begin{equation}
\lim_{k\to\infty}\Pi_{n_k}(R)=\Pi(R)\label{eq:PiR}
\end{equation}
for every rectangle $R=[x_1,x_2]\times[y_1,y_2]\subseteq [0,1]^2$.
Observe that it is sufficient to establish~\eqref{eq:PiR}
for rectangles $R=[0,x]\times[0,y]\subseteq [0,1]^2$ only.
Fix $x$ and $y$ and assume that $x\le y$.
We distinguish two cases.
If $x$ is an internal point of an interval $[a_i,b_i]$ (see Figure~\ref{fig:PiR}),
then
\begin{equation}
\Pi(R)=\max\{a_i,x-\max\{b_i-y,0\}\}.\label{eq:fig_PiR}
\end{equation}

\begin{figure}[!ht]
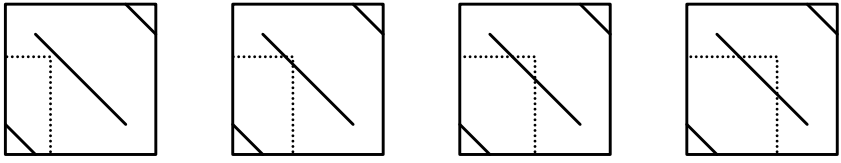

\begin{center}
\epsfbox{layer3-7.mps}
\hskip 8mm
\epsfbox{layer3-8.mps}
\hskip 8mm
\epsfbox{layer3-9.mps}
\hskip 8mm
\epsfbox{layer3-10.mps}
\end{center}
\caption{Illustration of the formula \eqref{eq:fig_PiR} for $[a_i,b_i]=[0.2,0.8]$. For $x\in\{0.3,0.4,0.5,0.6\}$ and $y=0.65$, the rectangle $R$ is drawn by the dotted lines.}
\label{fig:PiR}
\end{figure}

Note that $x$ is an internal point of the interval $J_{n_k,i}$ for all sufficiently large $n_k$ and
for all such $n_k$, it holds that
\[\Pi_{n_k}(R)=\max\{\min J_{n_k,i},x-\max\{\max J_{n_k,i}-y,0\}\}.\]
Hence, the identity \eqref{eq:PiR} follows.
On the other hand, if $x$ is not an internal point of an interval $[a_i,b_i]$ for any $i\in\NN$,
then $\Pi(R)$ and $\Pi_{n_k}(R)$ differ by at most the length of the interval $J_{n_k,i}$ that
contains $x$ as an internal point if such an interval exists, and
$\Pi(R)$ and $\Pi_{n_k}(R)$ are equal if there is no such interval.
Since the lengths of the intervals $J_{n_k,i}$ containing $x$ as an internal point must tend to zero (otherwise,
$x$ would be an internal point of an interval $[a_i,b_i]$ for some $i$),
the identity \eqref{eq:PiR} also follows in this case.
Since the case $x>y$ is analogous,
\eqref{eq:PiR} holds for every rectangle $R=[0,x]\times[0,y]\subseteq [0,1]^2$.
The identity \eqref{eq:Pi} now follows.

The estimate \eqref{eq:condprob} and the identity \eqref{eq:Pi} imply that
the permuton $\Pi$ is the limit permuton of the sequence $(\pi_{n_k})_{k\in\NN}$.
Since the sequence $(\pi_n)_{n\in\NN}$ is convergent, $\Pi$ is also its limit permuton.
The uniqueness of the limit permuton now yields the first part of the lemma.
For the second part, suppose that the number of layers of each permutation $\pi_n$ is at most $k$.
This implies that $J_{n_k,i}=[1,1]$ for all $i>k$ and so $\Pi$ has at most $k$ layers.
\end{proof}

A permuton $\Pi$ is \emph{$\sigma$-optimal}
if the density of $\sigma$ in $\Pi$ is equal to the packing density of $\sigma$.
Recall that, for every layered permutation $\sigma$ and every $n\geq|\sigma|$, there
exists a $\sigma$-optimal layered permutation $\pi_n$ of order $n$. By compactness,
the sequence $(\pi_n)_{n\in\NN}$ has a convergent subsequence.
By Lemma~\ref{lm:convergence},
the limit $\Pi$ of this subsequence is a layered permuton and, by the definition of limit permutons, 
the density of $\sigma$ in $\Pi$ is equal to the packing density.
Therefore, every layered permutation $\sigma$ has a $\sigma$-optimal layered permuton $\Pi$.
Moreover,
the density of $\sigma$ in $\Pi$ is the maximum possible density of $\sigma$ in any permuton:
if there was a permuton $\Pi'$ with a higher density of $\sigma$,
then a sequence of $\Pi'$-random permutations of increasing orders
would give a sequence of permutation with the density of $\sigma$ converging to a higher density than the packing density of $\sigma$.
Also note that
if $\Pi$ is a $\sigma$-optimal permuton with $k$ layers,
then a sequence of $\Pi$-random permutations of increasing orders is near-$\sigma$-optimal (with probability one) and
each of its elements has at most $k$ layers (since every $\Pi$-random permutation has at most $k$ layers).

\section{Maximizers with infinitely many layers}
\label{sec:counterexample}

This section is devoted to proving Theorem~\ref{thm:main}.
By Lemma~\ref{lm:convergence},
Theorem~\ref{thm:main} is implied by the next theorem,
which is the limit version of Theorem~\ref{thm:main}.

\begin{theorem}
\label{thm:counterexample}
For every $k\in\NN$ and $\ell_1,\ldots,\ell_k$,
there exists $n_0$ such that
if $\sigma$ is the layered permutation with layers of sizes $(n,1,\ell_1,\dots,\ell_k)$ where $n\ge n_0$,
then every $\sigma$-optimal layered permuton has an infinite number of layers.
\end{theorem}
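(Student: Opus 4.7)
The plan is to reduce the problem to Proposition~2.10 of Albert et al.\ via Lemma~\ref{lm:convergence}: for the ``reduced'' pattern $\sigma' := (1,\ell_1,\ldots,\ell_k)$, whose first layer is a singleton, every $\sigma'$-optimal layered permuton has infinitely many layers. The strategy is then to show that, for $n$ sufficiently large, any $\sigma$-optimal layered permuton must consist of a single very large first layer followed by a tail that (rescaled) is itself $\sigma'$-optimal, and hence has infinitely many layers.

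To set up the reduction, write $L = 1 + \ell_1 + \cdots + \ell_k$ and let $P^*$ denote the packing density of $\sigma'$. Consider a layered permuton $\Pi$ whose first layer has length $1-\epsilon$; a routine argument (swapping a larger layer to the front can only increase $d(\sigma,\Pi)$ for large $n$) shows that in a $\sigma$-optimal $\Pi$ this first layer is necessarily the largest one. Let $\Pi_{\mathrm{tail}}$ be the permuton obtained from the remaining layers by rescaling their total mass from $\epsilon$ to $1$. Splitting the embeddings of $\sigma$ into $\Pi$ according to which layer of $\Pi$ contains the $n$-block yields
\[d(\sigma,\Pi) = \binom{n+L}{n,1,\ell_1,\ldots,\ell_k}\left[(1-\epsilon)^n\epsilon^L\,\frac{d(\sigma',\Pi_{\mathrm{tail}})}{\binom{L}{1,\ell_1,\ldots,\ell_k}} + E(\Pi)\right],\]
where $E(\Pi)$ collects the contributions of embeddings whose $n$-block lies in a tail layer (of length at most $\epsilon$). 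A short convexity estimate based on $\sum_{j \ge 2} x_j^n \leq \epsilon^n$ yields $E(\Pi) = O(\epsilon^{n+L})$, smaller than the leading term above by a factor of order $(L/n)^n$.

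Next, since $(1-\epsilon)^n\epsilon^L$ attains its maximum at $\epsilon = L/(n+L)$ with value $n^n L^L/(n+L)^{n+L}$, the permuton $\Pi^*$ whose first layer has length $1 - L/(n+L)$ and whose tail is a scaled copy of a $\sigma'$-optimal layered permuton (which exists by compactness and has infinitely many layers by Proposition~2.10 combined with Lemma~\ref{lm:convergence}) asymptotically attains the bound in the display above with $d(\sigma',\Pi^*_{\mathrm{tail}}) = P^*$. Comparing an arbitrary $\sigma$-optimal $\Pi$ with $\Pi^*$ and using $d(\sigma,\Pi)\geq d(\sigma,\Pi^*)$ then forces
\[d(\sigma',\Pi_{\mathrm{tail}}) \;\geq\; P^* \;-\; O\bigl((L/n)^n\bigr).\]

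The main obstacle lies in converting this near-maximality into the infinite-layer conclusion: by compactness, the supremum $P^*(s)$ of $d(\sigma',\Pi')$ over layered permutons with at most $s$ layers is attained, and by Proposition~2.10 it satisfies $P^*(s) < P^*$ for every finite $s$, but $P^*(s) \to P^*$ as $s\to\infty$, so the bound $d(\sigma',\Pi_{\mathrm{tail}}) \geq P^* - O((L/n)^n)$ by itself does not exclude a finite-layered tail with very many layers. I expect this to be closed by a structure-sensitive refinement of the error estimate on $E(\Pi)$: for a finite-layered candidate $\Pi$, any nontrivial tail layer contributes simultaneously to $E(\Pi)$ and depresses $d(\sigma',\Pi_{\mathrm{tail}})$ relative to the $\sigma'$-optimal tail of $\Pi^*$, and a careful accounting should show that the two effects cannot cooperate to make a finite-layered $\Pi$ competitive with $\Pi^*$ once $n$ is sufficiently large in terms of $\ell_1,\ldots,\ell_k$, thereby forcing $\Pi_{\mathrm{tail}}$---and hence $\Pi$ itself---to have infinitely many layers.
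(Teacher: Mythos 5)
Your reduction has a genuine gap, and it is exactly the one you flag in your last paragraph: the step from ``the rescaled tail is nearly $\sigma'$-optimal'' to ``the tail has infinitely many layers'' cannot be made along the route you propose. For each fixed $n$, the comparison with $\Pi^*$ only yields $d(\sigma',\Pi_{\mathrm{tail}})\ge P^*-O\bigl((L/n)^n\bigr)$, a bound with a fixed positive error, while the supremum of $d(\sigma',\cdot)$ over layered permutons with at most $s$ layers tends to $P^*$ as $s\to\infty$; hence finite-layer tails satisfying your bound always exist. Proposition~2.10 of Albert et al.\ (equivalently, that every \emph{exactly} $\sigma'$-optimal permuton has infinitely many layers) gives no information about approximate optimizers, and the tail of a genuine $\sigma$-optimal $\Pi$ need not be exactly $\sigma'$-optimal: the error term $E(\Pi)$ is not an artifact, since embeddings whose $n$-block sits in a tail layer do contribute, so there is a real trade-off between maximizing $d(\sigma',\Pi_{\mathrm{tail}})$ and collecting those contributions. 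Consequently the ``structure-sensitive refinement'' you defer to is not a technical loose end but the entire content of the theorem; nothing in your accounting uses the finiteness of the number of layers in a quantitative way, and without such an input no contradiction can emerge.

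For comparison, the paper's proof closes precisely this hole by a different mechanism. Assuming a $\sigma$-optimal $\Pi$ has finitely many layers of lengths $x_1,\dots,x_K$, a swap and a first-order perturbation of the first two layers give $x_1\ge x_2$ and $x_1\ge nx_2$, and merging the first two layers gives $\Sigma_1\le x_1/n$. The decisive step exploits that the \emph{second} layer of $\sigma$ is a singleton: replacing the second layer of $\Pi$ (which has definite positive length $x_2$ because there are finitely many layers) by a scaled copy of any layered permuton $\Lambda$ with $d(\sigma',\Lambda)>0$ gains at least $\binom{n+L}{n}x_1^n x_2^L d(\sigma',\Lambda)$ while losing at most $A x_2^n\Sigma_1$; combined with $x_1\ge nx_2$ this forces $x_1=O\bigl(n^{-n/L}\bigr)$, whence $d(\sigma,\Pi)\le A\bigl(3x_1^n+x_1/n\bigr)\to 0$, contradicting the uniform lower bound on the packing density obtained from a permuton with first layer of length $n/(n+L)$ and a $\Lambda$-tail (essentially your $\Pi^*$). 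Note that only $d(\sigma',\Lambda)>0$ is needed, with no appeal to Proposition~2.10 or to near-optimality of the tail; any repair of your argument will need an ingredient of this kind, one that sees the positive length of an individual tail layer rather than just the aggregate $\sigma'$-density of the rescaled tail.
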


\begin{proof}
\setcounter{claim}{0}
\setcounter{claimprefix}{\getrefnumber{thm:counterexample}}
Fix $k$ and $\ell_1,\ldots,\ell_k$ for the proof and set $L=1+\ell_1+\cdots+\ell_k$.
We choose the exact value of $n_0$ at the end of the proof;
until then, we will only need that $n_0\ge L$.
Fix $n\ge n_0$, i.e., the order of $\sigma$ is $n+L$, and
consider any $\sigma$-optimal layered permuton~$\Pi$.
Suppose, for the sake of contradiction, that $\Pi$ has finitely many layers.
Let $K$ be the number of layers of $\Pi$ and $x_i$ the length of the $i$-th layer, $i\in [K]$. Note that all 
layers of $\Pi$ are non-trivial and that $\Pi$ must have at least as many layers as $\sigma$; so, $K\geq k+2$. 
Observe that
\begin{equation}
\label{eq:Dequals}
d(\sigma,\Pi) = A\left(\sum_{1\leq a<b<i_1<\cdots <i_k\leq K} x_a^nx_b\prod_{j=1}^kx_{i_j}^{\ell_j}\right)
\end{equation}
where
\[A=\frac{(n+L)!}{n!\prod_{j=1}^k\left(\ell_j!\right)}.\]
As we will consider several modifications of the permuton $\Pi$,
the following shorthands for various quantities will be useful when calculating the density of $\sigma$ in the modified permutons:
\begin{align*}
\Sigma_0 & = \sum_{3\leq a<b<i_1<\cdots<i_k\leq K}x_a^nx_b\prod_{j=1}^kx_{i_j}^{\ell_j},\\
\Sigma_1 & = \sum_{3\leq b<i_1<\cdots<i_k\leq K}x_b\prod_{j=1}^kx_{i_j}^{\ell_j},\mbox{ and}\\
\Sigma_2 & = \sum_{3\leq i_1<\cdots<i_k\leq K}\prod_{j=1}^kx_{i_j}^{\ell_j}.
\end{align*}
Observe that $\Sigma_0 \leq \Sigma_1 \leq \Sigma_2\le 1$ and $\Sigma_2 >0$ because $K\geq k+2$.
Also note that
\begin{equation}
d(\sigma,\Pi) = A\left(x_1^nx_2\Sigma_2 + x_1^n\Sigma_1+x_2^n\Sigma_1 + \Sigma_0\right).
\label{eq:Dexpanded}
\end{equation}

We will next consider five permutons obtained by modifying $\Pi$,
relate the density of $\sigma$ in them to the density in $\Pi$ and
use this to derive some properties of $\Pi$;
the five permutons that we consider are visualized in Figure~\ref{fig:permutons}.

\begin{figure}
\begin{center}
  \begin{tikzpicture}[scale=2]
    \draw (0,0) -- (0,1) -- (1,1) -- (1,0) -- cycle;
    \draw[thick] (0,0.1) -- (0.1,0);
    \draw[thick] (0.1,0.4) -- (0.4,0.1);
    \draw[gray!50,dashed] (0.1,0) -- (0.1,0.4) -- (1,0.4);
    \draw[gray!50,dashed] (0,0.1) -- (0.4,0.1) -- (0.4,1);
    \node at (0.7,0.7) {$\Pi^-$};
    \node at (0.05,-0.1) {\tiny $x_2$};
    \node at (0.25,-0.1) {\tiny $x_1$};
    \node at (0.5,-0.3) {$\Pi'$};
    \tikzset{xshift=1.5cm}
    \draw (0,0) -- (0,1) -- (1,1) -- (1,0) -- cycle;
    \draw[thick] (0,0.2) -- (0.2,0);
    \draw[thick] (0.4,0.2) -- (0.2,0.4);
    \draw[gray!50,dashed] (0.2,0) -- (0.2,0.4) -- (1,0.4);
    \draw[gray!50,dashed] (0,0.2) -- (0.4,0.2) -- (0.4,1);
    \node at (0.7,0.7) {$\Pi^-$};
    \node at (0,-0.1) {\tiny $x_1+y$};
    \node at (0.4,-0.1) {\tiny $x_2-y$};
    \node at (0.5,-0.3) {$\Pi_y$};
    \tikzset{xshift=1.5cm}
    \draw (0,0) -- (0,1) -- (1,1) -- (1,0) -- cycle;
    \draw[thick] (0,0.4) -- (0.4,0);
    \draw[gray!50,dashed] (0,0.4) -- (1,0.4);
    \draw[gray!50,dashed] (0.4,0) -- (0.4,1);
    \node at (0.7,0.7) {$\Pi^-$};
    \node at (0.2,-0.1) {\tiny $x_1+x_2$};
    \node at (0.5,-0.3) {$\Pi_{x_2}$};
    \tikzset{xshift=1.5cm}
    \draw (0,0) -- (0,1) -- (1,1) -- (1,0) -- cycle;
    \draw[thick] (0,0.6) -- (0.6,0);
    \draw[gray!50,dashed] (0,0.6) -- (1,0.6);
    \draw[gray!50,dashed] (0.6,0) -- (0.6,1);
    \node at (0.8,0.8) {$\Lambda$};
    \node at (0.3,-0.1) {\tiny $\frac n{n+L}$};
    \node at (0.5,-0.3) {$\Pi^+$};
    \tikzset{xshift=1.5cm}
    \draw (0,0) -- (0,1) -- (1,1) -- (1,0) -- cycle;
    \draw[thick] (0,0.25) -- (0.25,0);
    \draw[gray!50,dashed] (0.25,0) -- (0.25,0.4) -- (1,0.4);
    \draw[gray!50,dashed] (0,0.25) -- (0.4,0.25) -- (0.4,1);
    \node at (0.7,0.7) {$\Pi^-$};
    \node at (0.325,0.325) {\scriptsize $\Lambda$};
    \node at (0.125,-0.1) {\tiny $x_1$};
    \node at (0.325,-0.1) {\tiny $x_2$};
    \node at (0.5,-0.3) {$\Pi^*$};
  \end{tikzpicture}
\end{center}
\caption{The permutons $\Pi', \Pi_y, \Pi_{x_2}, \Pi^+$ and $\Pi^*$ from the proof of Theorem~\ref{thm:counterexample}.
The symbol $\Pi^-$ in the figure represents the part of the permuton $\Pi$ not including the first and second layer,
and $\Lambda$ represents a scaled down copy of the permuton $\Lambda$.}
\label{fig:permutons}
\end{figure}
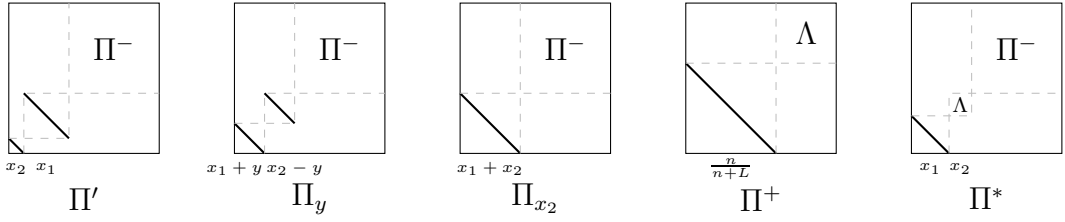

We start by showing that $x_1\geq n x_2$; the argument is split into two claims.

\begin{claim}
\label{claim:x_1>=x_2}
$x_1\geq x_2$.
\end{claim}

\begin{proofcl}[Proof of Claim~\ref{claim:x_1>=x_2}]
We consider the layered permuton $\Pi'$ obtained from $\Pi$ by swapping the first two layers,
i.e., the first layer of $\Pi'$ has length $x_2$, the second has length $x_1$, and
the $i$-th has length $x_i$ for $i=3,\ldots,K$.
Observe that
\[d(\sigma,\Pi')-d(\sigma,\Pi)=A\left(x_2^nx_1- x_1^nx_2\right)\Sigma_2.\]
Since $\Pi$ is $\sigma$-optimal, $A>0$ and $\Sigma_2>0$,
we conclude that $x_1\geq x_2$. 
\end{proofcl}

\begin{claim}
\label{claim:x_1/n}
$x_1\geq n x_2$.
\end{claim}

\begin{proofcl}[Proof of Claim~\ref{claim:x_1/n}]
For $y\in [0,x_2)$,
we consider the layered permuton $\Pi_y$
whose first layer has length $x_1+y$,
the second layer $x_2-y$,
and the $i$-th $x_i$ for $i = 3, \ldots, K$;
in particular, $\Pi_0$ is just the permuton $\Pi$.

Let $h(y)$ be the density of $\sigma$ in the permuton $\Pi_y$ (the density is viewed as a function of $y$), i.e.,
\[h(y)=d(\sigma,\Pi_y)=A\left((x_1+y)^n(x_2-y)\Sigma_2 + (x_1+y)^n\Sigma_1 + (x_2-y)^n\Sigma_1 + \Sigma_0\right).\]
Observe that
\begin{align*}
\frac{\partial h}{\partial y} & = A\left(n(x_1+y)^{n-1}(x_2-y)\Sigma_2 - (x_1+y)^n\Sigma_2 \right.\\
                              & + \left.n(x_1+y)^{n-1}\Sigma_1 - n(x_2-y)^{n-1}\Sigma_1\right),
\end{align*}
and that the value of the derivative of $h$ for $y=0$ is
\[A\left(nx_1^{n-1}x_2 - x_1^n\right)\Sigma_2 + A\left(nx_1^{n-1} - nx_2^{n-1}\right)\Sigma_1.\]
By Claim~\ref{claim:x_1>=x_2}, the second term in the above expression is non-negative. Therefore, 
the derivative of $h$ at $y=0$ is bounded below by
\[A\left(nx_1^{n-1}x_2 - x_1^n\right)\Sigma_2.\]
Since $\Pi$ is $\sigma$-optimal, the derivative of $h$ at $y=0$ must be non-positive. Therefore, $x_1\geq n x_2$. 
\end{proofcl}

We next bound $\Sigma_1$ from above as follows.

\begin{claim}
\label{claim:Sigma1}
$\Sigma_1 \leq \frac{x_1}{n}$.
\end{claim}

\begin{proofcl}[Proof of Claim~\ref{claim:Sigma1}]
We consider the layered permuton $\Pi_{x_2}$ obtained from $\Pi$ by merging its first and second layer;
formally, the permuton $\Pi_{x_2}$ is the layered permuton such that
its first layer has length $x_1+x_2$ and the $(i-1)$-th layer $x_i$ for $i=3,\ldots,K$.
The notation comes from Claim~\ref{claim:x_1/n} as $\Pi_{x_2}$ is the limit permuton of $\Pi_y$ for $y\to x_2$.
Observe that the difference
\begin{align*}
d(\sigma,\Pi_{x_2})-d(\sigma,\Pi) & = A\left(((x_1+x_2)^n - x_1^n-x_2^n)\Sigma_1 - x_1^nx_2\Sigma_2\right)\\
                            & \ge A\left(nx_1^{n-1}x_2\Sigma_1-x_1^nx_2\Sigma_2\right)
\end{align*}
must be non-positive since the permuton $\Pi$ is $\sigma$-optimal.
We use $A > 0$ and $\Sigma_2 \leq 1$ to obtain that $nx_1^{n-1}x_2\Sigma_1 \leq x_1^nx_2$,
which yields the bound in the inequality from the claim.
\end{proofcl}

Let $\sigma'$ be the layered permutation obtained from $\sigma$ by removing its first layer,
i.e., $\sigma'$ is the layered permutation with layers of sizes $(1,\ell_1,\dots,\ell_k)$.
The next two claims bound $d(\sigma,\Pi)$ and $x_1$ using the assumption that
there exists a layered permuton $\Lambda$ with a certain density of $\sigma'$.

\begin{claim}
\label{claim:DnotSmall}
For any layered permuton $\Lambda$, we have
\[d(\sigma,\Pi)\ge \binom{n+L}{n} \left(\frac{n}{n+L}\right)^n\left(\frac{L}{n+L}\right)^L d(\sigma',\Lambda).\]
\end{claim}

\begin{proofcl}[Proof of Claim~\ref{claim:DnotSmall}]
We consider the layered permuton $\Pi^+$ obtained from $\Lambda$ by adding a long first layer and rescaling.
Specifically, the first layer of the permuton $\Pi^+$ is $[0,n/(n+L)]$ and
if $[a,b]$ is a layer of $\Lambda$, then $[(n+aL)/(n+L),(n+bL)/(n+L)]$ is a layer of $\Pi^+$.
Since the density of $\sigma$ in $\Pi^+$ is at least
\[\binom{n+L}{n} \left(\frac{n}{n+L}\right)^n\left(\frac{L}{n+L}\right)^L d(\sigma',\Lambda)\]
and the permuton $\Pi$ is $\sigma$-optimal, the claim follows.
\end{proofcl}

Our final claim is the following.

\begin{claim}
\label{claim:x1Tiny}
For any layered permuton $\Lambda$ with $d(\sigma',\Lambda)>0$, we have
\[x_1\leq \left(\frac{A}{{n+L \choose n} n^{n-L} d(\sigma',\Lambda)}\right)^{1/L}.\]
\end{claim}

\begin{proofcl}[Proof of Claim~\ref{claim:x1Tiny}]
We consider the layered permuton $\Pi^*$ obtained from $\Pi$ by replacing the second layer with a scaled copy of~$\Lambda$,
i.e., the $i$-th layer of $\Pi$ for $i=1,3,\ldots,K$ is also a layer of $\Pi^*$ and
if $[a,b]$ is a layer of $\Lambda$, then $[x_1+ax_2,x_1+bx_2]$ is a layer of $\Pi^*$.
We use that the second layer of $\sigma$ has size one and derive that
\[d(\sigma,\Pi^*)-d(\sigma,\Pi)\ge {n+L \choose n} x_1^n x_2^L d(\sigma',\Lambda) - A x_2^n \Sigma_1\]
where the first term of right side accounts for the probability that
a $\Pi^*$-random permutation is $\sigma$,
its first layer is contained in the first layer of $\Pi^*$ and
its remaining layers in the layers of the scaled down copy of $\Lambda$, and
the second term accounts for the probability that
a $\Pi$-random permutation is $\sigma$ and
its first layer is contained in the second layer of $\Pi^*$.

Since $\Pi$ is $\sigma$-optimal, it holds that
\[{n+L \choose n} x_1^n x_2^L d(\sigma',\Lambda) \leq A x_2^n \Sigma_1,\]
and since $\Sigma_1 \leq 1$, we get
\[{n+L \choose n} x_1^n d(\sigma',\Lambda) \leq A x_2^{n-L}.\]
Using the assumption that $n \geq L$ and Claim~\ref{claim:x_1/n},
we derive that $x_1^{n-L} \geq (nx_2)^{n-L}$
and eventually obtain
\[{n+L \choose n} x_1^L n^{n-L} d(\sigma',\Lambda) \leq A,\]
which yields the desired inequality. 
\end{proofcl}

We next use \eqref{eq:Dexpanded} and $\Sigma_0 \leq \Sigma_1 \leq \Sigma_2 \leq 1$
to get that
\[d(\sigma,\Pi) \leq A\left(x_1^nx_2 + x_1^n+x_2^n + \Sigma_1\right).\]
Combining this with Claims~\ref{claim:x_1>=x_2} and~\ref{claim:Sigma1} and the fact that  $x_2 \leq 1$ yields
\begin{equation}
d(\sigma,\Pi) \leq A\left(3x_1^n + \frac{x_1}{n}\right).\label{eq:Dcontra}
\end{equation}
We next show that there exists $d_0>0$ such that $d(\sigma,\Pi)\ge d_0$ for all $n$.
Observe that
\[
\lim_{n\to\infty} \binom{n+L}{n} \left(\frac{n}{n+L}\right)^n\left(\frac{L}{n+L}\right)^L
 = \lim_{n\to\infty} \frac{L^L}{L!}\left(1-\frac{L}{n+L}\right)^n
 = \frac{L^L}{e^L L!}\,.\]
We fix any layered permuton $\Lambda$ such that $d(\sigma',\Lambda)>0$ and
obtain using Claim~\ref{claim:DnotSmall} that
the packing density of $\sigma$ for any sufficiently large $n$ is at least $\frac{L^L}{2e^L L!}d(\sigma',\Lambda)$;
the existence of $d_0$ now follows.

We next estimate the right side of \eqref{eq:Dcontra}.
Observe that $A=\frac{(n+L)!}{n!\prod_{j=1}^k\left(\ell_j!\right)}=\Theta(n^L)$
while $x_1=O(n^{-\frac{n}{L}})$ by Claim~\ref{claim:x1Tiny}.
We conclude that the right side of \eqref{eq:Dcontra} tends to $0$, and
choose $n_0$ in such a way that $n_0\ge L$ and the inequality \eqref{eq:Dcontra} does not hold for any $n\ge n_0$.
This finishes the proof of the theorem.
\end{proof}

The argument presented in Theorem~\ref{thm:counterexample}
can actually be used to obtain reasonably small bounds on $n_0$.
For example, consider the case when $k=1$ and $\ell_1=2$,
which means that $\sigma$ is the layered permutation with layers with sizes $(n,1,2)$ and
$\sigma'$ is the layered permutation with layers of sizes $(1,2)$, i.e., $\sigma'=132$.
Since the packing density of $\sigma'$ is $2\sqrt{3}-3$,
we can fix a permuton $\Lambda$ such that $d(\sigma',\Lambda)=2\sqrt{3}-3$.
As
\[A = \frac{(n+3)!}{n!\cdot 2} = 3{n+3 \choose n},\]
we get using Claims~\ref{claim:DnotSmall} and~\ref{claim:x1Tiny} that
\[d(\sigma,\Pi) \geq {n+3 \choose n}\left(\frac{n}{n+3}\right)^n\left(\frac{3}{n+3}\right)^3\left( 2\sqrt{3}-3\right)\]
and
\[x_1\leq \sqrt[3]{\frac{3}{n^{n-3}(2\sqrt{3}-3)}}\]
for any $\sigma$-optimal permuton $\Pi$ where $x_1$ is the length of the first layer of $\Pi$.
We show that it is possible to set $n_0=13$.
Observe that $(n+3) \leq 16n/13$ since $n\ge 13$.
It follows that
\[d(\sigma,\Pi)\geq{n+3 \choose n}\left(\frac{1}{e^3}\right)\left(\frac{3 \cdot 13}{16n}\right)^3\left(2\sqrt{3}-3\right)> 0.33{n+3 \choose n}\frac{1}{n^3}.\]
On the other hand, we obtain using $n\ge 13$ and $x_1\le 1$ that
\begin{align*}
A\left(3x_1^n + \frac{x_1}{n}\right) & \leq
3{n+3 \choose n}\left(\frac{3}{13^7(2\sqrt{3}-3)n^3} +
\frac{1}{n}\sqrt[3]{\frac{3}{13^4(2\sqrt{3}-3)n^6}}\right)\\
& < 0.19{n+3 \choose n}\frac{1}{n^3}.
\end{align*}
It follows that the inequality \eqref{eq:Dcontra} in the proof of Theorem~\ref{thm:counterexample} does not hold for any $n\ge 13$ and
so every $\sigma$-optimal layered permuton has infinitely many layers for every $n\ge 13$.

\section{Maximizers with finitely many layers}
\label{sec:finite}

This section is devoted to the proofs of Theorems~\ref{thm:finite} and~\ref{thm:constantBound2}.
We start with proving three lemmas needed to prove Theorem~\ref{thm:finite}.
Given a layered permuton $\Pi$,
we say that a non-trivial interval $[a,b]$ is a \emph{segment} of $\Pi$
if it can be written as a union of (possibly infinitely many) layers of $\Pi$. 

\begin{lemma}
\label{lm:segmentInLimit}
Let $\sigma$ be a layered permutation of order at least two with $k$ layers and without consecutive singleton layers.
If $\Pi$ is a $\sigma$-optimal layered permuton,
then every segment $[a,b]$ of $\Pi$ contains a layer of $\Pi$ of length at least $\frac{b-a}{|\sigma|^2k^{|\sigma|}}$. 
\end{lemma}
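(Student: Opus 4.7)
The plan is to proceed by contradiction. Suppose every layer of $\Pi$ inside $[a,b]$ has length less than $\epsilon := \frac{b-a}{m^2 k^m}$, where $m = |\sigma|$. I will exhibit a layered permuton $\Pi'$ with $d(\sigma, \Pi') > d(\sigma, \Pi)$, contradicting the $\sigma$-optimality of $\Pi$.

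Define $\Pi'$ by replacing all layers of $\Pi$ inside $[a,b]$ with $k$ consecutive layers of equal length $(b-a)/k$, leaving the layers of $\Pi$ outside $[a,b]$ unchanged. To compare the densities, expand $d(\sigma,\cdot) = \frac{m!}{\prod \ell_i!}\sum_{j_1<\cdots<j_k}\prod_{i=1}^k w_{j_i}^{\ell_i}$ and classify each term by the set $S\subseteq [k]$ of indices $i$ for which $w_{j_i}$ lies inside $[a,b]$. Since an embedding respects the layer order, $S$ must be a (possibly empty) interval $\{s,\ldots,t\}$, and the contribution from such $S$ factors as $C\cdot L(s)\cdot M_\bullet(s,t)\cdot R(t)$, where $L(s)$ and $R(t)$ are the weighted counts of embeddings of $\sigma$-layers $\{1,\ldots,s-1\}$ and $\{t+1,\ldots,k\}$ into $\Pi$-layers strictly left and right of $[a,b]$ respectively, $M_\bullet(s,t)$ is the analogous count for layers $s,\ldots,t$ using $\Pi$-layers inside $[a,b]$, and $C=\frac{m!}{\prod \ell_i!}$. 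Crucially, $L(s)$ and $R(t)$ depend only on the layers outside $[a,b]$, so they are identical for $\Pi$ and $\Pi'$.

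The core claim is that $M_{\Pi'}(s,t)\geq M_\Pi(s,t)$ for every interval $\{s,\ldots,t\}\subseteq [k]$, with strict inequality when $(s,t)=(1,k)$. Granting this, $d(\sigma,\Pi') > d(\sigma,\Pi)$ follows since $L(1)=R(k)=1$ (empty products) guarantees that $S=\{1,\ldots,k\}$ genuinely contributes to both densities. To verify the claim, I split by cases on $|S|$: when $|S|=1$ with $\ell_s=1$, both sides equal $b-a$; when $|S|=1$ with $\ell_s\geq 2$, the bound $M_\Pi(s,s)=\sum_\alpha y_\alpha^{\ell_s}\leq \epsilon^{\ell_s-1}(b-a)$ together with $M_{\Pi'}(s,s)=k\bigl(\frac{b-a}{k}\bigr)^{\ell_s}$ gives a ratio of at least $(m^2k^{m-1})^{\ell_s-1}\geq 1$; when $|S|=r\geq 2$, the no-consecutive-singletons assumption on $\sigma$ forces $q := \sum_{i=s}^t \ell_i - r \geq 1$, so combining $y_{\alpha_i}^{\ell_i-1}<\epsilon^{\ell_i-1}$ with the Maclaurin-type bound $\sum_{\alpha_s<\cdots<\alpha_t}\prod y_{\alpha_i}\leq (b-a)^r/r!$ yields $M_\Pi(s,t)\leq \epsilon^q(b-a)^r/r!$, while $M_{\Pi'}(s,t)=\binom{k}{r}\bigl(\frac{b-a}{k}\bigr)^{\sum\ell_i}$; the resulting ratio simplifies to $\frac{k^{\underline{r}}}{k^r}\cdot m^{2q}\cdot k^{q(m-1)-r}$, which is at least $1$ because $r\leq m-q$ forces $q(m-1)-r\geq m(q-1)\geq 0$ and $m\geq 2$ gives $m^{2q}\geq 1$.

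The main obstacle will be the careful case analysis ensuring the ratio $M_{\Pi'}/M_\Pi$ is at least one uniformly across all relevant $S$; the tight case is $S=\{1,\ldots,k\}$, where strict improvement requires $q=m-k\geq 1$, a \emph{global} consequence of no consecutive singletons in $\sigma$ combined with $|\sigma|\geq 2$ (the local consequence suffices for the other cases). A secondary technical point is that $[a,b]$ may contain infinitely many layers of $\Pi$, but this causes no difficulty since the bounds on $M_\Pi(s,t)$ only rely on $\sum_\alpha y_\alpha = b-a$ and the uniform bound $y_\alpha<\epsilon$.
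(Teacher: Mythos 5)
Your proposal is correct and is essentially the paper's own argument: you build the same modified permuton $\Pi'$ (the segment $[a,b]$ replaced by $k$ equal layers) and use the no-consecutive-singletons hypothesis to force at least two sampled points into a single short layer whenever two or more $\sigma$-layers land in $[a,b]$; your factorization by the interval $S$ and the outside factors $L(s),R(t)$ is just an explicit algebraic version of the paper's conditioning on the number of points falling in $[a,b]$ and on the choice of the points outside. One harmless slip to fix: with the bounds you state, the ratio in the case $|S|=r\ge 2$ is $k^{\underline{r}}\,m^{2q}\,k^{q(m-1)-r}$ (no extra $1/k^{r}$ factor), which is indeed at least $1$ by your inequality $q(m-1)-r\ge m(q-1)\ge 0$ together with $k^{\underline{r}}\ge 1$, whereas the expression as you wrote it carries a spurious factor $k^{\underline{r}}/k^{r}<1$ that your stated justification would not cover.
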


\begin{proof}
Suppose, for contradiction, that there exists a $\sigma$-optimal layered permuton $\Pi$ 
with a segment $[a,b]$ such that the length of every layer of $\Pi$ contained in $[a,b]$ 
is strictly less than $\frac{b-a}{|\sigma|^2k^{|\sigma|}}$.
Let $\Pi'$ be the layered
permuton obtained by replacing the layers of $\Pi$ in the segment $[a,b]$ with $k$ layers, each of 
length $(b-a)/k$. 
Using that $\sigma$ has no two consecutive singleton layers,
we argue that $d(\sigma,\Pi')>d(\sigma,\Pi)$.

Given $|\sigma|$ points sampled according to $\Pi$,
let $E_n$ be the event that exactly $n$ of the sampled points have their $x$-coordinate in $[a,b]$;
analogously,
$E'_n$ is the event that exactly $n$ of the sampled points according to $\Pi'$ have their $x$-coordinate in $[a,b]$.
Note that, since $\Pi$ and $\Pi'$ have uniform marginals,
it holds that $\PP(E_n)=\PP(E'_n)$ for all $n=0,\ldots,|\sigma|$.

Observe that the probability that
a $\Pi$-random permutation of order $|\sigma|$ is $\sigma$ conditioned on $E_0\cup E_1$
is equal to the probability that
a $\Pi'$-random permutation of order $|\sigma|$ is $\sigma$ conditioned on $E'_0\cup E'_1$.
We next analyze the probabilities that
a $\Pi$-random permutation of order $|\sigma|$ is $\sigma$ and
a $\Pi'$-random permutation of order $|\sigma|$ is $\sigma$
conditioned on a particular choice of $|\sigma|-n$ points outside the segment $[a,b]$ for $n\ge 2$.
Since the permutons $\Pi$ and $\Pi'$ agree outside the segment $[a,b]$,
the probabitity distributions of the events that we condition on are the same.
For a fixed choice of $|\sigma|-n$ points outside the segment $[a,b]$,
if the probability that
a $\Pi'$-random permutation of order $|\sigma|$ is $\sigma$ conditioned on the choice of the $|\sigma|-n$ points is zero,
then the probability that
a $\Pi$-random permutation of order $|\sigma|$ is $\sigma$ conditioned on the same choice of the points is also zero.
Otherwise,
the probability that
a $\Pi'$-random permutation of order $|\sigma|$ is $\sigma$ conditioned on the choice of the $|\sigma|-n$ points is at least $k^{-n}$ and
the probability that
a $\Pi$-random permutation of order $|\sigma|$ is $\sigma$ conditioned on the same choice of the points
is at most
\begin{equation}
\binom{|\sigma|}{2} \left(\frac{1}{|\sigma|^2k^{|\sigma|}}\right)<\frac{k^{-|\sigma|}}{2}.\label{eq:estim}
\end{equation}
The estimate \eqref{eq:estim} follows from the fact that $\sigma$ does not have two consecutive singleton layers and
so if the $\Pi$-random permutation is $\sigma$ and $n\ge 2$ points are sampled from the segment $[a,b]$,
then at least two points of $\sigma$ must come from the same layer contained in the segment $[a,b]$.
The probability of this event happening is upper bounded by the left side of \eqref{eq:estim}.
Hence, the probability that 
a $\Pi$-random permutation of order $|\sigma|$ is $\sigma$ conditioned on $E_2\cup\cdots\cup E_{|\sigma|-1}$
is at most the probability that
a $\Pi'$-random permutation of order $|\sigma|$ is $\sigma$ conditioned on $E'_2\cup\cdots\cup E'_{|\sigma|-1}$ (note that
the inequality is strict whenever the former probability is non-zero), and
the probability that
a $\Pi$-random permutation of order $|\sigma|$ is $\sigma$ conditioned on $E_{|\sigma|}$
is strictly smaller than the probability that
a $\Pi'$-random permutation of order $|\sigma|$ is $\sigma$ conditioned on $E'_{|\sigma|}$ (as these two probabilities are non-zero).
This contradicts the assumption that the permuton $\Pi$ is $\sigma$-optimal.
\end{proof}

A simple consequence of Lemma~\ref{lm:segmentInLimit} is the following.

\begin{lemma}
\label{lm:noTrivial}
Let $\sigma$ be a layered permutation of order at least two without consecutive singleton layers.
For every $\sigma$-optimal permuton $\Pi$,
the set of all $x\in [0,1]$ such that $[x,x]$ is a layer of $\Pi$ has measure zero.
\end{lemma}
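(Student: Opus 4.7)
My plan is to derive this from Lemma~\ref{lm:segmentInLimit} by contradiction. Let $T\subseteq[0,1]$ denote the set of $x$ such that $[x,x]$ is a layer of $\Pi$, and let $k$ be the number of layers of $\sigma$. Assuming $|T|>0$, the goal is to construct a segment $[c,d]$ of $\Pi$ in which every layer has length strictly less than $\frac{d-c}{|\sigma|^2k^{|\sigma|}}$, contradicting Lemma~\ref{lm:segmentInLimit}.

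The first step is to apply the Lebesgue density theorem to select a density point $x_0\in T$, so that $|T\cap(x_0-r,x_0+r)|\ge(1-\eta)\cdot 2r$ for all sufficiently small $r$, where $\eta>0$ is a tolerance to be chosen at the end. Since trivial layers have length zero, the non-trivial layers meeting $(x_0-r,x_0+r)$ intersect this interval in total length at most $2\eta r$; in particular, every non-trivial layer contained in $(x_0-r,x_0+r)$ has length at most $2\eta r$.

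The second step is to trim the interval so that its endpoints themselves lie in $T$. Because $T$ has density close to one in $(x_0-r,x_0+r)$, the same holds on each side of $x_0$; in particular, $T$ meets both $(x_0-r,\,x_0-r+4\eta r)$ and $(x_0+r-4\eta r,\,x_0+r)$, and I pick $c$ and $d$ in these two sub-intervals respectively. The key observation is that $c,d\in T$ forces $[c,d]$ to be a segment of $\Pi$: no non-trivial layer of $\Pi$ can contain a point of $T$ in its interior, so every layer of $\Pi$ meeting $[c,d]$ is in fact contained in $[c,d]$. Now $d-c$ is comparable to $2r$ while every layer inside $[c,d]$ has length at most $2\eta r$, so choosing $\eta$ sufficiently small as a function of $|\sigma|$ and $k$ (concretely, $\eta<(|\sigma|^2k^{|\sigma|}+4)^{-1}$ suffices) yields the promised contradiction with Lemma~\ref{lm:segmentInLimit}.

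The point that I expect to need the most care is the claim that a point $c$ with $\{c\}\in\JJ$ cannot lie in the interior of a non-trivial layer of $\Pi$, which is what makes $[c,d]$ a segment of $\Pi$; this is really a matter of interpreting $\JJ$ as an honest decomposition of $[0,1]$ into layers, consistent with the examples given after the definition in Section~\ref{sec:limits}. Once that is settled, the remainder of the argument is a short Lebesgue density estimate.
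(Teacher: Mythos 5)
Your argument is correct, and it is genuinely different from the one in the paper. You contradict Lemma~\ref{lm:segmentInLimit} once, by using the Lebesgue density theorem to find a point where the trivial-layer set $T$ has density close to $1$, and then trimming to endpoints $c,d\in T$ so that $[c,d]$ is a segment all of whose layers are shorter than the threshold; your quantitative bookkeeping checks out (with $d-c\ge 2r(1-4\eta)$ and every layer in $[c,d]$ of length at most $2\eta r$, your choice $\eta<(|\sigma|^2k^{|\sigma|}+4)^{-1}$ indeed beats the $\frac{d-c}{|\sigma|^2k^{|\sigma|}}$ bound). The paper instead iterates Lemma~\ref{lm:segmentInLimit} in a Cantor-style scheme: it repeatedly extracts from each remaining segment a layer occupying a fixed proportion of its length, so the trivial-layer set is trapped in nested unions $S_t$ whose measure decays geometrically. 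Both routes rest on the same two structural conventions about $\JJ$ that you rightly flag: the layers cover $[0,1]$, and a trivial layer cannot lie in the interior of a non-trivial layer (the paper uses these implicitly in asserting that $X$ is closed and that $X\subseteq\bigcap_t S_t$), so relying on them puts you on the same footing as the paper. What each approach buys: yours is shorter and needs only one application of Lemma~\ref{lm:segmentInLimit}, at the cost of invoking the density theorem and of needing $T$ measurable --- a point you should state (e.g., note as the paper does that $T$ is closed, or that $T$ differs from the complement of the countable union of non-trivial layers by at most a countable set of endpoints); the paper's argument is more elementary, avoiding measure-theoretic machinery beyond a geometric series. One small imprecision to fix in your write-up: it is not literally true that every layer \emph{meeting} $[c,d]$ is contained in $[c,d]$ (a non-trivial layer may touch $[c,d]$ exactly at the endpoint $c$ or $d$); what you actually need, and what your key observation does give since $c,d\in T$ and layers cover $[0,1]$, is that every point of $[c,d]$ lies in some layer contained in $[c,d]$, which suffices for $[c,d]$ to be a segment.
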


\begin{proof}
Let $X$ be the set of all $x\in [0,1]$ such that $[x,x]$ is a layer of $\Pi$.
Observe that $X$ is closed, and so it is measurable.
Let $k$ be the number of layers of $\sigma$ and define $r=|\sigma|^2k^{|\sigma|}$.

By applying Lemma~\ref{lm:segmentInLimit} to the segment $[0,1]$,
we see that $\Pi$ has a layer $I_1$ of length at least $1/r$.
Let $I_0=[0,\min I_1]$ and $I_2=[\max I_1,1]$.
If $I_0$ is non-trivial, then it is a segment of $\Pi$ and we can apply Lemma~\ref{lm:segmentInLimit} to it;
similarly, if $I_2$ is non-trivial, we can apply Lemma~\ref{lm:segmentInLimit} to it.
In general,
for a finite sequence $i_1,i_2,\dots,i_t\in\{0,2\}$,
we recursively define $I_{i_1,\dots,i_t,0}$, $I_{i_1,\dots,i_t,1}$ and $I_{i_1,\dots,i_t,2}$
to be internally disjoint intervals whose union is $I_{i_1,\dots,i_t}$ such that
if $I_{i_1,\dots,i_t}$ is a segment, then $I_{i_1,\dots,i_t,1}$ is a layer of $\Pi$ of length at least $|I_{i_1,\dots,i_t}|/r$.
If $I_{i_1,\dots,i_t}$ is trivial, we set $I_{i_1,\dots,i_t,0}=I_{i_1,\dots,i_t,1}=I_{i_1,\dots,i_t,2}=I_{i_1,\dots,i_t}$.
For $t\geq1$, let
\[S_t = \bigcup_{i_1,\dots,i_t\in\{0,2\}} I_{i_1,\dots,i_t},\]
and observe that $X\subseteq \bigcap_{t=1}^\infty S_t$.
However, for each $t\geq 1$,
the measure of $S_{t+1}$ is at most $\frac{r-1}{r}$ times the measure of $S_t$.
We conclude that the set $X$ has measure zero. 
\end{proof}

Our next lemma deals specifically with the classes of permutations
considered in Theorem~\ref{thm:finite}. 

\begin{lemma}
\label{lm:shortPairInLimit}
Let $\varepsilon>0$ and let $\sigma$ be a layered permutation with $k\geq 2$ 
layers of sizes $(\ell_1,\dots,\ell_k)$ such that $\ell_1\ge 2$, $\ell_k\ge 2$ and 
$\ell_i+\ell_{i+1}\ge\max\{\ell_1,\ell_k\}+1$ for every $i\in [k-1]$. If $\Pi$ is a 
$\sigma$-optimal layered permuton with at least $2k-3$ layers of length at least $\varepsilon$, 
then every segment $[a,b]$ of $\Pi$ that has at least two layers
has a layer of length at least $(\varepsilon/4)^{|\sigma|}/k$.
\end{lemma}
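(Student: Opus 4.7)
The plan is to argue by contradiction. Assume $\Pi$ is $\sigma$-optimal with at least $2k-3$ layers of length at least $\varepsilon$, yet some segment $[a,b]$ of $\Pi$ consists of at least two layers, each of length strictly less than $\delta := (\varepsilon/4)^{|\sigma|}/k$. Let $y_{j_1}, \ldots, y_{j_m}$ denote the lengths of the layers of $\Pi$ inside $[a,b]$, so that $\sum_r y_{j_r} = Y := b - a$ and each $y_{j_r} < \delta$. The strategy is to construct a layered permuton $\Pi'$ satisfying $d(\sigma, \Pi') > d(\sigma, \Pi)$, contradicting the $\sigma$-optimality of $\Pi$.

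The natural candidate is to replace the layers of $\Pi$ inside $[a,b]$ with a single layer of length $Y$, keeping the layers outside $[a,b]$ unchanged. Using the explicit density formula
\[
d(\sigma, \Pi) = \frac{|\sigma|!}{\ell_1! \cdots \ell_k!} \sum_{i_1 < \cdots < i_k} \prod_{j=1}^k y_{i_j}^{\ell_j},
\]
I would decompose both $d(\sigma, \Pi)$ and $d(\sigma, \Pi')$ according to the number of indices $i_j$ corresponding to layers inside $[a,b]$. Tuples with no such index contribute identically to both densities. Tuples with exactly one such index contribute $Y^{\ell_j} \prod_{j' \neq j} y_{i_{j'}}^{\ell_{j'}}$ in $\Pi'$ versus $\sum_r y_{j_r}^{\ell_j} \prod_{j' \neq j} y_{i_{j'}}^{\ell_{j'}}$ in $\Pi$; the estimate $\sum_r y_{j_r}^{\ell_j} \leq \delta^{\ell_j - 1} Y$ (which follows from $y_{j_r} < \delta$ and $\sum_r y_{j_r} = Y$) shows that these yield a strict gain when $\ell_j \geq 2$ and vanish when $\ell_j = 1$. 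Tuples with two or more indices inside $[a,b]$ contribute $0$ to $d(\sigma, \Pi')$ and represent the only loss terms, each carrying a factor of at least $\delta^2$ in the product.

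To produce a concrete lower bound on the gain, I would invoke the hypothesis of at least $2k-3$ large layers together with $\ell_1, \ell_k \geq 2$. By pigeonhole, at least $k-1$ of the large layers lie on one particular side of $[a,b]$; accordingly, placing the first $\sigma$-layer (of size $\ell_1 \geq 2$) inside $[a,b]$ and distributing the remaining $k-1$ $\sigma$-layers among distinct large $\Pi$-layers in the correct order, or symmetrically using the last $\sigma$-layer (of size $\ell_k \geq 2$), yields a gain of at least $(Y^{\ell_j} - \delta^{\ell_j - 1} Y) \cdot \varepsilon^{|\sigma| - \ell_j}$ (times a combinatorial factor depending only on $\sigma$) for some $j \in \{1, k\}$. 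The precise choice $\delta = (\varepsilon/4)^{|\sigma|}/k$ is tailored so that this gain exceeds the total loss, which is bounded by a combinatorial constant times $\delta^2$.

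The main obstacle will be handling the regime where $Y$ is comparable to or smaller than $\delta$: then the first-order gain $Y^{\ell_j} - \delta^{\ell_j - 1} Y$ need not be positive, and the modification above fails to yield a contradiction directly. I would address this regime by an alternative modification, for instance merging $[a,b]$ with an adjacent layer of $\Pi$ or shifting the entire mass of $[a,b]$ to a neighboring large layer, and comparing the densities by a similar but more delicate counting. The hypothesis $\ell_i + \ell_{i+1} \geq \max\{\ell_1, \ell_k\} + 1$ on consecutive layers of $\sigma$ plays a key role in controlling $\sigma$-copies whose points span multiple short layers inside $[a,b]$, which is what makes the loss bounds uniform enough to close the argument in both regimes.
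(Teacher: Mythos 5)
Your overall strategy coincides with the paper's (argue by contradiction; replace the layers inside $[a,b]$ by a single layer; split copies of $\sigma$ according to how many of its layers are sampled from $[a,b]$; use the $2k-3$ long layers and pigeonhole to place $k-1$ of them on one side of the segment), but the way you lower-bound the gain leaves a genuine gap, which you yourself flag and do not close. The bound $(Y^{\ell_j}-\delta^{\ell_j-1}Y)\varepsilon^{|\sigma|-\ell_j}$ is vacuous whenever $Y=b-a$ is comparable to or smaller than $\delta$ (the segment may well consist of two layers of length far below $\delta$), and your proposed fallback for that regime --- merging $[a,b]$ into an adjacent layer, or shifting its mass onto a neighbouring long layer --- is only a sketch: such modifications also destroy copies of $\sigma$ that use $[a,b]$ and the adjacent layer as distinct layers, so a strict density gain there is not automatic and nothing in your text establishes it. As written, the proof is incomplete precisely in that regime.

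The paper avoids the two-regime split entirely. Writing $z_j$ for the lengths of the layers inside $[a,b]$, it keeps the cross terms of the multinomial expansion,
\[
\Bigl(\sum_j z_j\Bigr)^{\ell_k}-\sum_j z_j^{\ell_k}\;\ge\;\sum_{j_1<j_2}\bigl(z_{j_1}^{\ell_k-1}z_{j_2}+z_{j_1}z_{j_2}^{\ell_k-1}\bigr),
\]
multiplies by the probability $p\ge\varepsilon^{|\sigma|}$ of completing $\sigma$ minus its last (or first) layer inside the $k-1$ long layers on the appropriate side, and then compares gain and loss pair-by-pair: the loss is at most $4^{|\sigma|}\sum_{i}\sum_{j_1<j_2}z_{j_1}^{\ell_i}z_{j_2}^{\ell_{i+1}}$, and since each individual $z_j<(\varepsilon/4)^{|\sigma|}/k$ and $\ell_i+\ell_{i+1}\ge\max\{\ell_1,\ell_k\}+1$, the gain attached to each pair $(j_1,j_2)$ strictly exceeds the corresponding loss. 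This comparison uses only the smallness of each single layer, never the total length $Y$, which is exactly the idea missing from your argument. Two smaller points: your identity $\sum_r y_{j_r}=Y$ (and the implicit finiteness of the family of layers in $[a,b]$) requires Lemma~\ref{lm:noTrivial}, i.e.\ that trivial layers of a $\sigma$-optimal permuton have measure zero (applicable here because $\ell_i+\ell_{i+1}\ge 3$ excludes consecutive singletons, and the layers may be countably infinite in number); and the orientation is forced, not optional: if the $k-1$ long layers lie in $[0,a]$ it must be the last layer of $\sigma$ that is placed in $[a,b]$, and symmetrically the first layer when they lie in $[b,1]$.
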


\begin{proof}
Suppose, for the sake of contradiction, that $\Pi$ is a $\sigma$-optimal permuton that
has $2k-3$ layers of length at least $\varepsilon$ and
a segment $[a,b]$ with at least two layers such that
every layer contained in $[a,b]$ has length less than $(\varepsilon/4)^{|\sigma|}/k$. 

Let $\mathcal{I}$ be the set of all non-trivial layers of $\Pi$ contained in $[a,b]$.
Since the set $\mathcal{I}$ is countable,
we can index the elements of $\mathcal{I}$ by $I_j$ for $j\in S$,
where $S$ is either equal $\NN$ or $[n]$ for some $n\in\NN$.
For $j\in S$, let $z_j$ be the length of $I_j$.
Since $\ell_i+\ell_{i+1}\ge \max\{\ell_1,\ell_k\}+1\ge 3$,
the permutation $\sigma$ has no consecutive singleton layers and
so Lemma~\ref{lm:noTrivial} yields that $\sum_{j\in S}z_j=b-a$.

Consider a permuton $\Pi'$ obtained from $\Pi$ by replacing the segment $[a,b]$ with a single layer. 
We will show that $d(\sigma,\Pi')>d(\sigma,\Pi)$, contradicting that the permuton $\Pi$ is $\sigma$-optimal.
Since the permuton $\Pi$ has $2k-3$ layers each of length at least $\varepsilon$ and
none of these layers is contained in the segment $[a,b]$ (since each layer of the segment
is shorter than $(\varepsilon/4)^{|\sigma|}/k$),
there are $k-1$ layers of length at least $\varepsilon$ contained in $[0,a]$ or
there are $k-1$ layers of length at least $\varepsilon$ contained in $[b,1]$.
Since the two cases are completely symmetric, 
we analyze in detail the former case only.

Let $\sigma'$ be the permutation obtained from $\sigma$ by removing its last layer,
i.e., $\sigma'$ is the layered permutation with layers of sizes $(\ell_1,\ldots,\ell_{k-1})$.
Let $p$ be the probability that a $\Pi$-random permutation of order $|\sigma'|$ is $\sigma'$ and
the $x$-coordinates of all of its points belong to $[0,a]$.
Observe that $p\ge \varepsilon^{|\sigma'|}>\varepsilon^{|\sigma|}$. Note that $p$
is also equal to the probability that a $\Pi'$-random permutation of order $|\sigma'|$ is $\sigma'$ and
the $x$-coordinates of all of its points belong to $[0,a]$.
The probability that a $\Pi'$-random permutation of order $|\sigma|$ is $\sigma$ and
its last layer is sampled from $[a,b]$ is 
\[\binom{|\sigma|}{\ell_k} p (b-a)^{\ell_k} = \binom{|\sigma|}{\ell_k} p \left(\sum_{j\in S}z_j\right)^{\ell_k}.\]
Similarly, for $j\in S$, the probability that a $\Pi$-random permutation of order $|\sigma|$
is $\sigma$, its last layer is sampled from $I_j$, and all of its other layers are 
sampled from $[0,a]$ is equal to 
\[\binom{|\sigma|}{\ell_k} p z_j^{\ell_k}.\]
For $i\in [k-1]$, the probability that
a $\Pi$-random permutation is $\sigma$ and its $i$-th layer is sampled from the segment $[a,b]$ and
no other layer is sampled from the segment $[a,b]$
is at most the probability that
a $\Pi'$-random permutation is $\sigma$ and its $i$-th layer is sampled from the segment $[a,b]$ and
no other layer is sampled from the segment $[a,b]$.
Finally,
for $i\in [k-1]$, the probability that
a $\Pi$-random permutation is $\sigma$ and its $i$-th and $(i+1)$-th layers are
both sampled from the segment $[a,b]$ (note that these events are not disjoint for different $i$'s as
more than two layers can be sampled from $[a,b]$) is at most
\[\sum_{\substack{j_1,j_2\in S\\ j_1<j_2}}\binom{|\sigma|}{\ell_i}\binom{|\sigma|-\ell_i}{\ell_{i+1}}z_{j_1}^{\ell_i}z_{j_2}^{\ell_{i+1}}\le 2^{2|\sigma|}\sum_{\substack{j_1,j_2\in S\\ j_1<j_2}}z_{j_1}^{\ell_i}z_{j_2}^{\ell_{i+1}}.\]
Using the fact that $z_j<(\varepsilon/4)^{|\sigma|}/k$ for all $j\in S$, it follows that
\begin{align*}
d(\sigma,\Pi')-d(\sigma,\Pi) & \ge \binom{|\sigma|}{\ell_k}p\left(\left(\sum_{j\in S}z_j\right)^{\ell_k}-\sum_{j\in S}z_j^{\ell_k}\right)-2^{2|\sigma|}\sum_{i\in [k-1]}\sum_{\substack{j_1,j_2\in S\\ j_1<j_2}}z_{j_1}^{\ell_i}z_{j_2}^{\ell_{i+1}}\\
&                              \ge\varepsilon^{|\sigma|}\left(\left(\sum_{j\in S}z_j\right)^{\ell_k}-\sum_{j\in S}z_j^{\ell_k}\right)-4^{|\sigma|}\sum_{i\in [k-1]}\sum_{\substack{j_1,j_2\in S\\ j_1<j_2}}z_{j_1}^{\ell_i}z_{j_2}^{\ell_{i+1}}\\
&                              \ge\varepsilon^{|\sigma|}\sum_{\substack{j_1,j_2\in S\\ j_1<j_2}}\left(z_{j_1}^{\ell_k-1}z_{j_2} + z_{j_1}z_{j_2}^{\ell_k-1} \right)-4^{|\sigma|}\sum_{i\in [k-1]}\sum_{\substack{j_1,j_2\in S\\ j_1<j_2}}z_{j_1}^{\ell_i}z_{j_2}^{\ell_{i+1}}\\
&                        > 4^{|\sigma|}\left(k\sum_{\substack{j_1,j_2\in S\\ j_1<j_2}}\left(z_{j_1}^{\ell_k}z_{j_2} + z_{j_1}z_{j_2}^{\ell_k} \right)-\sum_{i\in [k-1]}\sum_{\substack{j_1,j_2\in S\\ j_1<j_2}}z_{j_1}^{\ell_i}z_{j_2}^{\ell_{i+1}}\right)\\
&			 \ge 4^{|\sigma|}\sum_{i\in [k-1]}\sum_{\substack{j_1,j_2\in S\\ j_1<j_2}}\left(z_{j_1}^{\ell_k}z_{j_2} + z_{j_1}z_{j_2}^{\ell_k}-z_{j_1}^{\ell_i}z_{j_2}^{\ell_{i+1}}\right).
\end{align*}
Since it holds $\ell_i+\ell_{i+1}\ge\ell_k+1$ for every $i\in [k-1]$ by the hypothesis,
we get that the following holds for all $i\in[k-1]$ and $j_1,j_2\in S$:
\begin{align*}
z_{j_1}^{\ell_k}z_{j_2}\geq z_{j_1}^{\ell_{i}}z_{j_2}^{\ell_{i+1}} &  \mbox{ if $z_{j_1} \geq z_{j_2}$, and}\\
z_{j_1}z_{j_2}^{\ell_k} \geq z_{j_1}^{\ell_{i}}z_{j_2}^{\ell_{i+1}} & \mbox{ if $z_{j_2} \geq z_{j_1}$.}
\end{align*}
This yields that
\[z_{j_1}^{\ell_k}z_{j_2} + z_{j_1}z_{j_2}^{\ell_k} > z_{j_1}^{\ell_i}z_{j_2}^{\ell_{i+1}} \geq 0.\]
It follows that $d(\sigma,\Pi')-d(\sigma,\Pi)>0$,
which contradicts the assumption that the permuton $\Pi$ is $\sigma$-optimal.
\end{proof}

We are now ready to prove Theorem~\ref{thm:finite}.

\begin{proof}[Proof of Theorem~\ref{thm:finite}]
Assume, for the sake of contradiction, that
there exists a $\sigma$-optimal layered permuton $\Pi$ with infinitely many layers, and
let $\varepsilon$ be the length of its $(2k-3)$-th longest layer;
note that $\varepsilon>0$ by Lemma~\ref{lm:noTrivial}.
By Lemma~\ref{lm:shortPairInLimit}, every segment $[a,b]$ of $\Pi$ is either
a layer of $\Pi$ or it contains a layer of length at least $(\varepsilon/4)^{|\sigma|}/k$. 
This implies that the number of layers of $\Pi$ is finite,
in particular, the permuton $\Pi$ has no trivial layers, and
that the number of layers of $\Pi$ is at most $2 \floor{k(4/\varepsilon)^{|\sigma|}} + 1$;
this contradicts the assumption that $\Pi$ has infinitely many layers.
\end{proof}

We next turn our attention to proving Theorem~\ref{thm:constantBound2},
which will follow from the next two lemmas.

\begin{lemma}
\label{lm:klayers}
Let $\sigma$ be a layered permutation with $k$ layers and without
consecutive singleton layers. Then every $\sigma$-optimal permuton has at least
$k$ layers of length at least $\frac{1}{|\sigma|^3k^{2|\sigma|+1}}$.
\end{lemma}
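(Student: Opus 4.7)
The plan is to assume for contradiction that a $\sigma$-optimal layered permuton $\Pi$ has fewer than $k$ layers of length at least $c:=1/(|\sigma|^3k^{2|\sigma|+1})$, and to derive a contradiction by comparing $d(\sigma,\Pi)$ with the density achieved by the balanced layered permuton $\Pi_0$ consisting of exactly $k$ layers each of length $1/k$. The unique admissible tuple for $\Pi_0$ contributes $\prod(1/k)^{\ell_j}$, so $d(\sigma,\Pi_0)=C/k^{|\sigma|}$ where $C:=\binom{|\sigma|}{\ell_1,\ldots,\ell_k}$; hence the packing density is at least $C/k^{|\sigma|}$, forcing $d(\sigma,\Pi)\ge C/k^{|\sigma|}$.

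First I would use Lemma~\ref{lm:segmentInLimit} to bound the total mass of small layers. The at most $k-1$ big layers (those of length $\ge c$) partition $[0,1]$ into at most $k$ segments, each of which consists entirely of small layers (length $<c$). Applied to such a segment of length $L$, Lemma~\ref{lm:segmentInLimit} produces a layer of length $\ge L/(|\sigma|^2k^{|\sigma|})$, which must be small, forcing $L<c|\sigma|^2k^{|\sigma|}=1/(|\sigma|k^{|\sigma|+1})$. Summing over the at most $k$ such segments, the total small mass $\epsilon$ satisfies $\epsilon<1/(|\sigma|k^{|\sigma|})$.

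Next I would upper bound $d(\sigma,\Pi)=C\sum_{i_1<\cdots<i_k}\prod_j y_{i_j}^{\ell_j}$. Since $y^\ell\le y$ for $y\in[0,1]$ and $\ell\ge 1$, this is at most $C\cdot e_k(y_1,y_2,\ldots)$, where $e_k$ denotes the $k$-th elementary symmetric polynomial. Decomposing $e_k(y)=\sum_{t}e_{k-t}(y_B)e_t(y_S)$ into big and small contributions (where $B$ and $S$ are the sets of big and small layer indices), the $t=0$ summand vanishes because $|B|<k$. Applying Maclaurin's inequality $e_j(x_1,x_2,\ldots)\le(\sum_i x_i)^j/j!$ (valid for countably many non-negative terms via $\prod_i(1+x_iz)\le e^{(\sum_i x_i)z}$), one deduces
\[
e_k(y)\le\sum_{t\ge 1}\frac{\alpha^{k-t}\epsilon^t}{(k-t)!\,t!}=\frac{(\alpha+\epsilon)^k-\alpha^k}{k!}=\frac{1-\alpha^k}{k!}\le\frac{k\epsilon}{k!}=\frac{\epsilon}{(k-1)!},
\]
where $\alpha:=1-\epsilon$ denotes the total big-layer mass.

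Combining the above yields $d(\sigma,\Pi)\le C\epsilon/(k-1)!<C/[(k-1)!\,|\sigma|\,k^{|\sigma|}]\le C/k^{|\sigma|}$, where the last inequality is strict because $(k-1)!\cdot|\sigma|\ge 3>1$ using $k\ge 2$ and $|\sigma|\ge 3$ (the latter follows from $k\ge 2$ together with the no-consecutive-singletons hypothesis, which forces at least one layer of size $\ge 2$). This contradicts the $\sigma$-optimality of $\Pi$, completing the proof. The hard part will be extracting the factor $\epsilon$ in the bound on $e_k(y)$: naive estimates that ignore $|B|<k$ yield only $e_k(y)\le 1/k!$, which is too weak. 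The key observation is that the vanishing of the $t=0$ term of the decomposition forces the remaining binomial expansion to telescope to $(1-\alpha^k)/k!\le k\epsilon/k!$, producing exactly the $\epsilon$ factor needed to beat the baseline $C/k^{|\sigma|}$.
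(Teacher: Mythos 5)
Your strategy is sound and, in its second half, genuinely different from the paper's. Both arguments rest on the same two pillars: the balanced $k$-layer permuton gives $d(\sigma,\Pi)\ge\binom{|\sigma|}{\ell_1,\ldots,\ell_k}k^{-|\sigma|}$ for any $\sigma$-optimal $\Pi$, and Lemma~\ref{lm:segmentInLimit} produces a long layer inside any segment. But where you argue by contradiction, bound the total ``small'' mass $\epsilon$, and then run the $e_k$-decomposition with Maclaurin's inequality to get $d(\sigma,\Pi)\le C\epsilon/(k-1)!$, the paper proceeds directly: it lets $r$ be the total length of the $k-1$ longest layers, observes $d(\sigma,\Pi)\le|\sigma|(1-r)$ by a union bound (a sample inducing $\sigma$, which has $k$ layers, must use at least one point outside the $k-1$ longest layers), deduces $1-r\ge 1/(|\sigma|k^{|\sigma|})$, and applies Lemma~\ref{lm:segmentInLimit} to the longest segment avoiding those layers to exhibit the $k$-th long layer. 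The same union bound would also shortcut your computation: under your contradiction hypothesis, $d(\sigma,\Pi)\le|\sigma|\epsilon$, which already beats $Ck^{-|\sigma|}$ with your bound on $\epsilon$, so the symmetric-function machinery, while correct, does more work than necessary and is more fragile (see below).

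The one genuine gap is your opening identity $d(\sigma,\Pi)=C\sum_{i_1<\cdots<i_k}\prod_j y_{i_j}^{\ell_j}$ over the non-trivial layers. For a general layered permuton this is false whenever $\sigma$ has singleton layers -- precisely the permutations this lemma must cover beyond Theorem~\ref{thm:thm27} -- because a point sampled from the trivial-layer part can realize a singleton layer of $\sigma$: for $\sigma=132$ and the permuton that is the identity on $[0,1/2]$ and a single layer on $[1/2,1]$, the density of $\sigma$ is $3/8$ while your right-hand side and $e_2(y)$ are $0$, so the bound $d(\sigma,\Pi)\le Ce_k(y)$ is not automatic. You need to first invoke Lemma~\ref{lm:noTrivial} (its hypotheses, order at least two and no consecutive singletons, are exactly yours) to conclude that the trivial layers of a $\sigma$-optimal permuton carry measure zero; with that citation your identity and the rest of the computation go through. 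Two minor points: the final strictness should not appeal to $k\ge 2$, which is not a hypothesis -- all you need is $(k-1)!\,|\sigma|>1$, which holds once $|\sigma|\ge 2$ (and $|\sigma|\ge2$ must be assumed anyway, both for Lemma~\ref{lm:segmentInLimit} and because the statement degenerates for $\sigma=1$); and the countable Maclaurin bound $e_j(x)\le(\sum_i x_i)^j/j!$ is best justified by the multinomial expansion rather than by pointwise comparison of $\prod_i(1+x_iz)$ with $e^{z\sum_i x_i}$, since pointwise domination of power series with nonnegative coefficients does not imply coefficientwise domination. The endpoint issue in treating the intervals between big layers as segments is at the same level of rigor as the paper's own proof and is harmless.
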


\begin{proof}
Let $\Pi$ be a $\sigma$-optimal layered permuton. Clearly, $\Pi$ 
has at least $k$ layers as otherwise $d(\sigma,\Pi)=0$. 
Since the packing density of $\sigma$ is at least $k^{-|\sigma|}$,
it holds that $d(\sigma,\Pi)\ge k^{-|\sigma|}$.

Let $r$ be the sum of the lengths of the $k-1$ largest layers of $\Pi$.
Observe that $d(\sigma,\Pi)\le |\sigma|(1-r)$ (because every $\Pi$-random permutation that is $\sigma$ contains a point outside of the $k-1$ largest layers of $\Pi$),
which implies that $\frac{1}{|\sigma|k^{|\sigma|}}\le 1-r$.
Let $[a,b]$ be the longest segment of $\Pi$ which does not contain any
of the $k-1$ longest layers, and note that its length is at least $(1-r)/k$. 
By Lemma~\ref{lm:segmentInLimit}, this segment contains a layer of length at least
\[\left(\frac{1-r}{k}\right)\left(\frac{1}{|\sigma|^2k^{|\sigma|}}\right)\geq \frac{1}{|\sigma|^3k^{2|\sigma|+1}}.\]
The statement of the lemma now follows.
\end{proof}

The second lemma needed to prove Theorem~\ref{thm:constantBound2} is the following.

\begin{lemma}
\label{lm:mergepair}
Let $\sigma$ be a layered permutation with layers of sizes $(\ell_1,\dots,\ell_k)$ and $k\ge 2$ such that
$\ell_1=\ell_k\ge 2$ and every two consecutive layers of $\sigma$ contain a layer of size $\ell_1=\ell_k$.
For every $C\in (0,1)$, there exists $c\in (0,C)$ such that
the following holds for every layered permutation $\pi$ of order at least $|\sigma|C^{-1}$ that
has at least $k$ layers of size at least $C|\pi|$.
If $\pi$ contains two consecutive layers of size at most $c|\pi|$ and
$\pi'$ is a permutation obtained from $\pi$ by merging any two such layers,
then $d(\sigma,\pi')\ge d(\sigma,\pi)$.
In addition,
if the merged layer has size at least $\ell_1$, then $d(\sigma,\pi')>d(\sigma,\pi)$.
\end{lemma}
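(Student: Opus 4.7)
My plan is to write $d(\sigma,\pi)$ and $d(\sigma,\pi')$ as explicit sums over placements of the layers of $\sigma$ into the layers of $\pi$ (resp.\ $\pi'$) and compare them term by term. Let $n=|\pi|$, enumerate the layers of $\pi$ as $L_1,\dots,L_m$ with sizes $s_1,\dots,s_m$, and let $A=L_p,B=L_{p+1}$ be two consecutive layers with $|A|,|B|\le cn$. Since a layer of $\sigma$ induces a decreasing sequence, it lies in a single layer of $\pi$, so
\[\binom{n}{|\sigma|}d(\sigma,\pi)=\sum_{1\le j_1<\cdots<j_k\le m}\prod_{i=1}^k\binom{s_{j_i}}{\ell_i}.\]
Only $\sigma$-copies meeting $A$ or $B$ contribute to $d(\sigma,\pi')-d(\sigma,\pi)$; a copy using both $A$ and $B$ must use them for two consecutive layers $i,i+1$ of $\sigma$, since no single layer of $\sigma$ can span two layers of $\pi$. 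Splitting accordingly,
\[\binom{n}{|\sigma|}\bigl(d(\sigma,\pi')-d(\sigma,\pi)\bigr)=P-N,\]
with
\[P=\sum_{i=1}^{k}\tilde f_i\sum_{j=1}^{\ell_i-1}\binom{|A|}{j}\binom{|B|}{\ell_i-j},\qquad N=\sum_{i=1}^{k-1}g_i\binom{|A|}{\ell_i}\binom{|B|}{\ell_{i+1}},\]
where $\tilde f_i$ (resp.\ $g_i$) is the weighted number of placements of the layers of $\sigma$ other than layer $i$ (resp.\ other than layers $i$ and $i+1$) into $\pi\setminus\{A,B\}$, with layers indexed below the excluded one(s) going before $A$ and layers indexed above going after $B$.

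Set $L:=\ell_1=\ell_k$. If $|A|+|B|<L$, then $N=0$: every pair $(\ell_i,\ell_{i+1})$ contains an $L$-layer by hypothesis, so each negative term involves $\binom{|A|}{L}$ or $\binom{|B|}{L}$, and both vanish. Assume therefore $|A|+|B|\ge L$. Using $\ell_i+\ell_{i+1}\ge L+1$ together with the crude bounds $\binom{|A|}{\ell_i}\binom{|B|}{\ell_{i+1}}\le(cn)^{\ell_i+\ell_{i+1}}$ and $g_i\le n^{|\sigma|-\ell_i-\ell_{i+1}}$ gives $N\le k\,c^{L+1}n^{|\sigma|}$ (since $c<1$ makes the worst case $\ell_i+\ell_{i+1}=L+1$). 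For the positive side, let $a$ be the number of large layers of $\pi$ (of size $\ge Cn$) lying before $A$ and $b=k-a$ the number lying after $B$; as $A,B$ are small, $a+b\ge k$. The endpoint condition $\ell_1=\ell_k=L$ combined with the consecutive-pair hypothesis guarantees an index $i^*\in\{\max(1,k-b),\dots,\min(k,a+1)\}$ with $\ell_{i^*}=L$: for $a=0$ the set is $\{1\}$ and $\ell_1=L$; for $a=k$ it is $\{k\}$ and $\ell_k=L$; and for intermediate $a$ the pair $(\ell_a,\ell_{a+1})$ contains an $L$-layer. Placing the layers of $\sigma$ other than layer $i^*$ into distinct large layers of $\pi$ (layers indexed below $i^*$ before $A$, those above after $B$) yields
\[\tilde f_{i^*}\ge\prod_{\ell\ne i^*}\binom{Cn}{\ell_\ell}\ge\left(\frac{Cn}{|\sigma|}\right)^{|\sigma|-L},\]
valid because $n\ge|\sigma|/C$.

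A case split on the sizes of $|A|$ and $|B|$ completes the proof. If $|A|,|B|$ are of order $cn$, then $P\ge P_{i^*}\ge\tilde f_{i^*}\cdot\Theta_\sigma((cn)^L)=\Omega_\sigma(C^{|\sigma|-L}c^Ln^{|\sigma|})$, so $P/N\ge\Omega_\sigma(C^{|\sigma|-L}/c)$; on the other hand, if $|A|+|B|$ is close to $L$, then $|A|,|B|$ are bounded by constants depending only on $\sigma$, giving $N=O_\sigma(n^{|\sigma|-L-1})$ while $P\ge\tilde f_{i^*}\ge\Omega_\sigma(n^{|\sigma|-L})$, hence $P/N=\Omega_\sigma(n)$. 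In either regime, choosing $c\in(0,C)$ sufficiently small in terms of $\sigma$ and $C$ guarantees $P\ge N$. Strict inequality when $|A|+|B|\ge L$ follows from the strict positivity of $\binom{|A|+|B|}{L}-\binom{|A|}{L}-\binom{|B|}{L}$ in that range, which appears as part of $P_{i^*}$ for the index $i^*$ with $\ell_{i^*}=L$. The main obstacle is the combinatorial alignment in the choice of $i^*$: one needs an index with $\ell_{i^*}=L$ compatible with the large-layer distribution $(a,b)$ of $\pi$, which crucially uses both the endpoint condition $\ell_1=\ell_k=L$ and the ``every consecutive pair contains a layer of size $L$'' hypothesis on $\sigma$; the choice of $c$ then absorbs the universal constants.
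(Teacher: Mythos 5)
Your setup is the same as the paper's: the identity $\binom{n}{|\sigma|}(d(\sigma,\pi')-d(\sigma,\pi))=P-N$ with $\tilde f_i=p_{i-1}q_{i+1}$ and $g_i=p_{i-1}q_{i+2}$, the observation that $N=0$ when $|A|+|B|<L$, the choice of an index $i^*$ with $\ell_{i^*}=L$ compatible with the distribution of the big layers, and the bound $\tilde f_{i^*}\ge(Cn/|\sigma|)^{|\sigma|-L}$ all appear (in the same form) in the paper's argument. The gap is in the final comparison of $P$ and $N$. Your bound $N\le k\,c^{L+1}n^{|\sigma|}$ throws away the dependence on the actual sizes $|A|,|B|$ (it replaces $\binom{|A|}{\ell_i}\binom{|B|}{\ell_{i+1}}$ by the worst case $(cn)^{\ell_i+\ell_{i+1}}$), while your lower bound on $P$ does depend on $|A|,|B|$. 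Your two cases --- ``$|A|,|B|$ of order $cn$'' and ``$|A|+|B|$ close to $L$'' --- do not exhaust the possibilities, and in the intermediate regimes the comparison you write down is simply false. Concretely, take $\sigma$ with layers $(2,2)$, $|A|=\lfloor cn\rfloor$, $|B|=2$: then $P=\Theta(C^{2}c\,n^{3})$, which for large $n$ is far below both your case-1 claim $\Omega(C^{2}c^{2}n^{4})$ and your upper estimate $k\,c^{3}n^{4}$ for $N$, even though the true $N$ is only $\Theta(c^{2}n^{2})$ and the lemma's conclusion does hold. So the argument as written cannot be closed by ``choosing $c$ small''; moreover, in your second case the ratio $P/N=\Omega_\sigma(n)$ has an implicit constant depending on $C$ and $\sigma$, and the hypothesis only gives $n\ge|\sigma|C^{-1}$, so smallness of $c$ does not help there either.

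The fix is exactly the step you skipped: keep both estimates as functions of $\alpha=|A|/n$ and $\beta=|B|/n$ and compare monomials term by term. One bounds the $i^*$-th positive term from below by a constant (depending on $\sigma,C$) times $n^{|\sigma|}(\alpha^{L-1}\beta+\alpha\beta^{L-1})$, using $\binom{|A|+|B|}{L}-\binom{|A|}{L}-\binom{|B|}{L}\ge\frac{2|A||B|}{L(L-1)}\binom{|A|+|B|-2}{L-2}$, and bounds each negative term from above by a constant times $n^{|\sigma|}\alpha^{\ell_i}\beta^{\ell_{i+1}}$; since every consecutive pair contains a layer of size $L$, each such monomial is at most $\alpha^{L}\beta+\alpha\beta^{L}\le c\,(\alpha^{L-1}\beta+\alpha\beta^{L-1})$, and then a single choice of $c$ (depending only on $\sigma$ and $C$) makes $P-N$ strictly positive uniformly in $|A|,|B|$, with no case analysis on their sizes. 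This also yields the strict inequality claim, which in your write-up inherits the same gap, since strictness requires $P>N$ and not merely that $P$ contains a positive term.
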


\begin{proof}
Fix a permutation $\sigma$ with the properties given in the statement of the lemma and $C\in (0,1)$.
We prove that the statement of the lemma holds for $c\in (0,C)$,
which is determined at the very end of the proof and which depends on $\sigma$ and $C$ only.
Let $\pi$ be a permutation with the properties given in the statement of the lemma,
let $\pi_a$ and $\pi_{a+1}$ be the layers of $\pi$ that are merged to obtain $\pi'$, and
let $\alpha, \beta\in (0,1)$ be such that $|\pi_a|=\alpha|\pi|$ and $|\pi_{a+1}|=\beta|\pi|$.
In addition, for $i\in[1,k]$,
let $p_i$ be the number of occurrences of the layered permutation with layers $(\ell_1,\ldots,\ell_i)$ in $\pi$ before $\pi_a$, and
let $q_i$ be the number of occurrences of the layered permutation with layers $(\ell_i,\ldots,\ell_k)$ after $\pi_{a+1}$; and let $p_0 = q_{k+1} = 1$. 

We next count the occurrences of $\sigma$ in $\pi$ and $\pi'$ similarly to the proofs of Claim~\ref{claim:Sigma1} and Lemma~\ref{lm:shortPairInLimit}.  
Considering which layers of $\sigma$ are sampled from $\pi_a$ and $\pi_{a+1}$,
we rewrite ${|\pi|\choose|\sigma|} (d(\sigma,\pi')-d(\sigma,\pi))$ as
\begin{eqnarray}
&\sum_{i=1}^k p_{i-1}q_{i+1}\left({|\pi_a|+|\pi_{a+1}|\choose \ell_i}-{|\pi_a|\choose \ell_i}-{|\pi_{a+1}|\choose \ell_i}\right) -\nonumber\\
&\sum_{i=1}^{k-1}p_{i-1}q_{i+2}{|\pi_a|\choose \ell_i}{|\pi_{a+1}|\choose \ell_{i+1}}.\label{eq:mergep}
\end{eqnarray}

First note that if $|\pi_a|+|\pi_{a+1}|\le\ell_1$, then the second sum
is zero as $\ell_i=\ell_1$ or $\ell_{i+1}=\ell_1$ for every $i\in[1,k-1]$.
So, if $|\pi_a|+|\pi_{a+1}|\le\ell_1$,
then the conclusion of the lemma follows since it holds that
\[{|\pi_a|+|\pi_{a+1}|\choose \ell_i}\geq  {|\pi_a|\choose \ell_i}+{|\pi_{a+1}|\choose \ell_i}\]
for every $i\in [k]$.
Hence, we will assume that $|\pi_a|+|\pi_{a+1}|\ge \ell_1+1$ in the rest of the proof.

We next give a lower bound on the first sum in \eqref{eq:mergep}.
We say that a layer of $\pi$ is \emph{big} if its size is at least $C|\pi|$;
note that $C|\pi|\ge |\sigma|$.
Let $j$ be such that there are at least $j-1$ big layers before the layer $\pi_a$,
at least $k-j$ big layers after the layer $\pi_{a+1}$, and $\ell_j=\ell_1$.
It follows that
\[p_{j-1}q_{j+1}\geq  \prod_{i\neq j} {\lceil C|\pi|\rceil\choose \ell_i}\geq  \prod_{i\neq j} \left(\frac{C|\pi|}{\ell_i}\right)^{\ell_i}=(C|\pi|)^{|\sigma|-\ell_1}\prod_{i\neq j}\ell_i^{-\ell_i}.\]
Since the first sum in \eqref{eq:mergep} is at least its $j$-th term,
we obtain that the first sum in \eqref{eq:mergep} is at least
\begin{align*}
  &(C|\pi|)^{|\sigma|-\ell_1}\prod_{i\neq j}\ell_i^{-\ell_i}\left({{|\pi_a|+|\pi_{a+1}|\choose \ell_1}}-{{|\pi_a|\choose \ell_1}-{|\pi_{a+1}|\choose \ell_1}}\right)\\
  &\geq  (C|\pi|)^{|\sigma|-\ell_1}\prod_{i\neq j}\ell_i^{-\ell_i} \times \frac{2|\pi_a||\pi_{a+1}|}{\ell_1(\ell_1-1)}{|\pi_a|+|\pi_{a+1}|-2\choose \ell_1-2}\\
  &> (C|\pi|)^{|\sigma|-\ell_1}\prod_{i\neq j}\ell_i^{-\ell_i} \times \frac{2|\pi_a||\pi_{a+1}|}{\ell_1^2}\left(\frac{|\pi_a|+|\pi_{a+1}|-2}{\ell_1}\right)^{\ell_1-2}\\
  &\geq  (C|\pi|)^{|\sigma|-\ell_1}\prod_{i\neq j}\ell_i^{-\ell_i} \times \frac{2|\pi_a||\pi_{a+1}|}{\ell_1^2}\left(\frac{|\pi_a|+|\pi_{a+1}|}{3\ell_1}\right)^{\ell_1-2}\\
  &\geq  A |\pi|^{|\sigma|}\alpha\beta\left(\alpha+\beta\right)^{\ell_1-2}\ge A |\pi|^{|\sigma|}\left(\alpha^{\ell_1-1}\beta + \alpha\beta^{\ell_1-1}\right)
\end{align*}
where $A=\frac{2C^{|\sigma|-\ell_1}}{(3\ell_1)^{\ell_1}\prod_{i\in [k]}\ell_i^{\ell_i}}$ (note that $A$ depends on $\sigma$ and $C$ only). 

We next find an upper bound on the second sum in \eqref{eq:mergep}.
Let $i\in[1,k-1]$, and
observe that $p_{i-1}q_{i+2}$ is at most the number of ways of choosing $|\sigma|-\ell_i-\ell_{i+1}$
points from a permutation of order $|\pi|-|\pi_a|-|\pi_{a+1}|$,
i.e., $p_{i-1}q_{i+2}\leq {|\pi|-|\pi_a|-|\pi_{a+1}|\choose|\sigma|-\ell_i-\ell_{i+1}}$.
Therefore, the $i$-th summand of the second sum in \eqref{eq:mergep} is at most 
\begin{align*}
  &{|\pi|-|\pi_a|-|\pi_{a+1}|\choose|\sigma|-\ell_i-\ell_{i+1}}{|\pi_a|\choose \ell_i}{|\pi_{a+1}|\choose \ell_{i+1}} \\
  &\leq  \left(\frac{e(|\pi|-|\pi_a|-|\pi_{a+1}|)}{|\sigma|-\ell_i-\ell_{i+1}}\right)^{|\sigma|-\ell_i-\ell_{i+1}}  \left(\frac{e|\pi_a|}{\ell_i}\right)^{\ell_i}\left(\frac{e|\pi_{a+1}|}{\ell_{i+1}}\right)^{\ell_{i+1}}\\
  &\leq  B |\pi|^{|\sigma|} (1-\alpha-\beta)^{|\sigma|-\ell_i-\ell_{i+1}}\alpha^{\ell_i}\beta^{\ell_{i+1}}\\
  &\leq  B |\pi|^{|\sigma|} \alpha^{\ell_i}\beta^{\ell_{i+1}} \leq  B |\pi|^{|\sigma|} (\alpha^{\ell_1}\beta + \alpha\beta^{\ell_1})
\end{align*}
where $B=\max_{i\in [k]}\frac{e^{|\sigma|}}{\ell_i^{\ell_i}\ell_{i+1}^{\ell_{i+1}}(|\sigma|-\ell_i-\ell_{i+1})^{|\sigma|-\ell_i-\ell_{i+1}}}$ (again $B$ depends on $\sigma$ only).

We combine the bounds on the first and the second sum in \eqref{eq:mergep} to obtain that
\begin{align*}
d(\sigma,\pi')-d(\sigma,\pi)
& > {|\pi|\choose|\sigma|}^{-1}|\pi|^{|\sigma|}\left(A (\alpha^{\ell_1-1}\beta + \alpha\beta^{\ell_1-1}) - kB (\alpha^{\ell_1}\beta + \alpha\beta^{\ell_1})\right) \\
& \geq \left(\frac{|\sigma|!}{|\pi|^{|\sigma|}}\right)|\pi|^{|\sigma|}\left(A  (\alpha^{\ell_1-1}\beta + \alpha\beta^{\ell_1-1}) - kB (\alpha^{\ell_1}\beta + \alpha\beta^{\ell_1})\right) \\
& \geq A  (\alpha^{\ell_1-1}\beta + \alpha\beta^{\ell_1-1}) - kB (\alpha^{\ell_1}\beta + \alpha\beta^{\ell_1}).
\end{align*}
Hence, the lemma holds with $c=\min\{A/kB,C\}/2$.
\end{proof}

We are now ready to prove Theorem~\ref{thm:constantBound2}.

\begin{proof}[Proof of Theorem~\ref{thm:constantBound2}]
Fix a layered permutation $\sigma$ of order $m$ and with layers of sizes $(\ell_1,\dots,\ell_k)$, $k\ge 2$, such that
$\ell_1=\ell_k\ge 2$ and every two consecutive layers of $\sigma$ contain a layer of size $\ell_1=\ell_k$.
In particular, $\sigma$ does not have consecutive singleton layers.
Apply Lemma~\ref{lm:mergepair} with $C=\frac{1}{2m^3k^{2m+1}}$ to get a constant $c\in (0,C)$.

For the sake of contradiction, suppose that for every $n\in\NN$
there exists a $\sigma$-optimal permutation $\pi_n$ with at least $n$ layers.
Without loss of generality,
we can assume also assume that $|\pi_n|\ge |\sigma|/C$ for every $n$ and that
the sequence $(\pi_n)_{n\in\NN}$ is convergent (as any sequence of permutations with increasing orders has a convergent subsequence), and
let $\Pi$ be the limit permuton of $(\pi_n)_{n\in\NN}$.
Note that the permuton $\Pi$ is layered (by Lemma~\ref{lm:convergence}) and 
the density of $\sigma$ in $\Pi$ is the packing density of $\sigma$.
Lemma~\ref{lm:klayers} implies that the permuton $\Pi$ has $k$ layers of length at least $2C$.
Hence, there exists $n_0\in\NN$ such that every permutation $\pi_n$, $n\ge n_0$,
has $k$ layers of size at least $C|\pi_n|$ (cf.~the proof of Lemma~\ref{lm:convergence}).

For $n\ge n_0$, define $\pi'_n$ to be a layered permutation obtained from the permutation $\pi_n$
by successively merging pairs of consecutive layers of size at most $c|\pi_n|$ as long as
two consecutive layers of size at most $c|\pi_n|$ exist.
Observe that the number of layers of $\pi'_n$ is at most $\floor{2/c}+1$.
By Lemma~\ref{lm:mergepair}, it holds that $d(\sigma,\pi'_n)\ge d(\sigma,\pi_n)$, and
since $\pi_n$ is a $\sigma$-optimal permutation,
it actually holds that $d(\sigma,\pi'_n)=d(\sigma,\pi_n)$.
So, Lemma~\ref{lm:mergepair} yields that no layer of $\pi'_n$ obtained by merging two or more layers of $\pi_n$
has size at least $\ell_1$.
It follows that the number of layers of $\pi_n$ is at most $\left(\floor{2/c}+1\right)(\ell_1-1)$,
which is impossible if $n>\left(\floor{2/c}+1\right)(\ell_1-1)$.
The statement of the theorem follows.
\end{proof}

\section{Conclusion}
\label{sec:concl}

We conclude with five open problems
focusing on the nature of the layered maximizers for a layered permutation 
$\sigma$.
The first question asks whether
the existence of near-$\sigma$-optimal sequence of layered permutations with a bounded number of layers
implies that all $\sigma$-optimal layered permutations have a bounded number of layers.

\begin{problem}
\label{prob2}
Is there a layered permutation $\sigma$ such that
there exists a near-$\sigma$-optimal sequence of layered permutations with a bounded number of layers
but for every $k\in\NN$ there exists a $\sigma$-optimal layered permutation (of some order) with at least $k$ layers?
\end{problem}

In principle, the existence of a $\sigma$-optimal layered permutation with a bounded number of layers
might depend on the order of a host permutation for some layered permutations $\sigma$.

\begin{problem}
\label{probinf}
Is there a layered permutation $\sigma$ and an integer $m$ such that
for infinitely many $n\in\NN$,
there exists a $\sigma$-optimal layered permutation of order $n$ with at most $m$ layers, and
still for every $k\in\NN$, there exists $n\in\NN$ such that
every $\sigma$-optimal layered permutation of order $n$ has at least $k$ layers?
\end{problem}

A $\sigma$-optimal permutation for a layered permutation $\sigma$ need not be layered in general.
For example, Albert et al.~\cite[Remark~2.3]{AlbAHHS02} observed that, for $\sigma=43215$, the permutation
\[16,15,\ldots,2,1,17,19,20,18\]
is $\sigma$-optimal, but not layered.
However, we do not know whether there exists such an example in the limit setting.

\begin{problem}
\label{prob0}
Is there a layered permutation $\sigma$ such that
there exists a $\sigma$-optimal permuton that is not layered?
\end{problem}

The final two problems are limit analogues of Problem~\ref{prob2}.

\begin{problem}
\label{prob3}
Is there a layered permutation $\sigma$ such that
there exist both a $\sigma$-optimal permuton with finitely many layers and a $\sigma$-optimal permuton with infinitely many layers?
\end{problem}

\begin{problem}
\label{prob4}
Is there a layered permutation $\sigma$ such that
every $\sigma$-optimal permuton has finitely many layers
but there exist $\sigma$-optimal permutons with an arbitrarily large number of layers?
\end{problem}

\section*{Acknowledgement}
We thank the anonymous referee for their helpful comments.

\bibliographystyle{bibstyle}
\bibliography{layer3}

\end{document}